\DeclarePairedDelimiter\ceil{\lceil}{\rceil}
\algnewcommand{\Initialize}[1]{%
	\State \textbf{initialize}
	\Statex \hspace*{\algorithmicindent}\parbox[t]{.8\linewidth}{\raggedright #1}
}
\newtheorem{theorem}{Theorem}[section]
\newtheorem{definition}{Definition}
\begin{document}
\title{An adaptive approach to real-time estimation of vehicle sideslip, road bank angles and sensor bias}
\author{Yi-Wen Liao~and~Francesco Borrelli,~\IEEEmembership{Fellow,~IEEE}
\thanks{Yi-Wen Liao and Francesco Borrelli are with the Department
of Mechanical Engineering, University of California, Berkeley, 94720, USA. e-mail: ywliao@berkeley.edu, fborrelli@berkeley.edu.}}

\markboth{IEEE TRANSACTIONS ON VEHICULAR TECHNOLOGY, ACCEPTED AS A PAPER}%
{Shell \MakeLowercase{\textit{et al.}}: Bare Demo of IEEEtran.cls for Journals}
\maketitle
\begin{abstract}
Robust estimation of vehicle sideslip angle is essential for stability control applications. However, the direct measurement of sideslip angle is expensive for production vehicles. This paper presents a novel sideslip estimation algorithm which relies only on sensors available on passenger and commercial vehicles. The proposed method uses both kinematics and dynamics vehicle models to construct extended Kalman filter observers. The estimate relies on the results provided from the dynamics model observer where the tire cornering stiffness parameters are updated using the information provided from the kinematics model observer.
The stability property of the proposed algorithm is discussed and proven. Finally, multiple experimental tests are conducted to verify its performance in practice. The results show that the proposed approach provides smooth and accurate sideslip angle estimation. In addition, our novel algorithm provides reliable estimates of bank angles and lateral acceleration sensor bias. 
\end{abstract}
\begin{IEEEkeywords}
sideslip angle estimation, bank angle estimation, extended Kalman filter, recursive robust least square
\end{IEEEkeywords}
\IEEEpeerreviewmaketitle
\section{Introduction}
\IEEEPARstart{A} number of active safety features have been introduced in the automotive industry in the past 30 years to prevent accidents such as braking assistance, traction and electronic stability control systems \cite{pilutti1998vehicle,tseng1999development,hara1991traction,wilson1998driver}. The main goals of these systems are to maintain vehicle stability and to improve vehicle handling. To implement these functions, vehicle states, parameters and road conditions need to be measured or estimated. Among all of these, sideslip angle, the angle between the longitudinal direction of the vehicle and the velocity vector, is one of the most important variables which heavily influences vehicle dynamics and is required by a number of active safety controllers. 
Although it can be directly measured by sensors such as optical sensors \cite{lee2004robust} or GPS sensors \cite{bevly2006integrating,bevly2000use}, these solutions are not implemented by OEMs because of cost and reliability. 
Therefore, the estimation of sideslip angle based on the sensors available in production vehicles is an important topic that has been widely discussed in the literature \cite{selmanaj2017robust,farrelly1996estimation,strano2018constrained,dakhlallah2008tire,aoki2004robust,li2014variable,shao2016nonlinear,you2009new,liu1998state,grip2009vehicle,coy2014decision,peng1experimental,boada2016vehicle,kang2016vehicle,chen2008sideslip,piyabongkarn2009development,de2017real}. Most of the approaches in the literature are model-based and can be classified into three main categories: kinematics model-based, dynamics model-based and a combination of the two.

The kinematics model-based approach proposed in \cite{selmanaj2017robust,farrelly1996estimation} constructs an observer based on the longitudinal and lateral translation kinematics of a point mass model. This method has the advantage of not requiring the vehicle parameters, tire model and road friction coefficient. It can provide an accurate sideslip estimate in a number of cases. However, it suffers from a drifting issue in small yaw rate maneuvers and the estimated result is sensitive to disturbance and measurement noise such as bank angles or sensor bias when the longitudinal and lateral accelerations are small.
The dynamics model-based approach constructs an advanced state observer (i.e. an extended Kalman filter or an unscented Kalman filter) by using a bicycle model or its variations \cite{strano2018constrained,dakhlallah2008tire,aoki2004robust,li2014variable,shao2016nonlinear,you2009new,liu1998state}. These models consider the effect of forces applied to vehicle mass and rotation inertia which provides a relatively robust estimate to acceleration measurement noise compared to the one from the kinematics model. However, an estimation bias is often observed due to the model uncertainties associated to variations of vehicle mass and the tire cornering stiffnesses. Existing literature has also focused on developing algorithms for estimating the sideslip angle and vehicle model system parameters simultaneously. In \cite{shao2016nonlinear,you2009new,liu1998state,grip2009vehicle}, Lyapunov-based observers have been proposed for the tire cornering stiffness identification. Although these techniques can improve the estimation results, they require persistent input excitations and the adaptation performance becomes worse beyond the linear tire model region. Alternative studies have proposed learning-based techniques to assist the traditional adaptation methods \cite{coy2014decision,peng1experimental,boada2016vehicle}. However, the estimated performance is hard to validate in the region when data is limited.
The third category of algorithms tries to merge kinematics and dynamics models into a hybrid solution \cite{kang2016vehicle,chen2008sideslip,piyabongkarn2009development,de2017real}. The concept is to switch between these two estimators and to exploit their respective advantages. This method might look attractive, however the switching will cause a discontinuity in the sideslip estimation. 
Motivated by the idea of the hybrid solution~\cite{kang2016vehicle,chen2008sideslip,piyabongkarn2009development,de2017real} and parameter adaptation, in this paper, we develop a novel sideslip estimation algorithm which only relies on the dynamics model for the estimation but utilizes the strength of the kinematics observer to adapt the front and rear tire cornering stiffnesses. 
In this way, we maintain the advantage of the dynamics model-based observer and further improve the estimator performance in the nonlinear tire region.
In addition, the proposed approach does not need rich input excitation as required in traditional adaptation methods.

The paper is organized as follows. We first introduce commonly used models for kinematics and dynamics-based observer designs. Then, modifications with bank angle and sensor bias effects are considered and included into each of the models. A recursive adaptation algorithm is derived and the stability property is discussed afterwards. Finally, the performance is validated with different scenario tests and compared with existing methods.
\begin{figure}[b]
	\centering
	\includegraphics[width = 3.2 in]{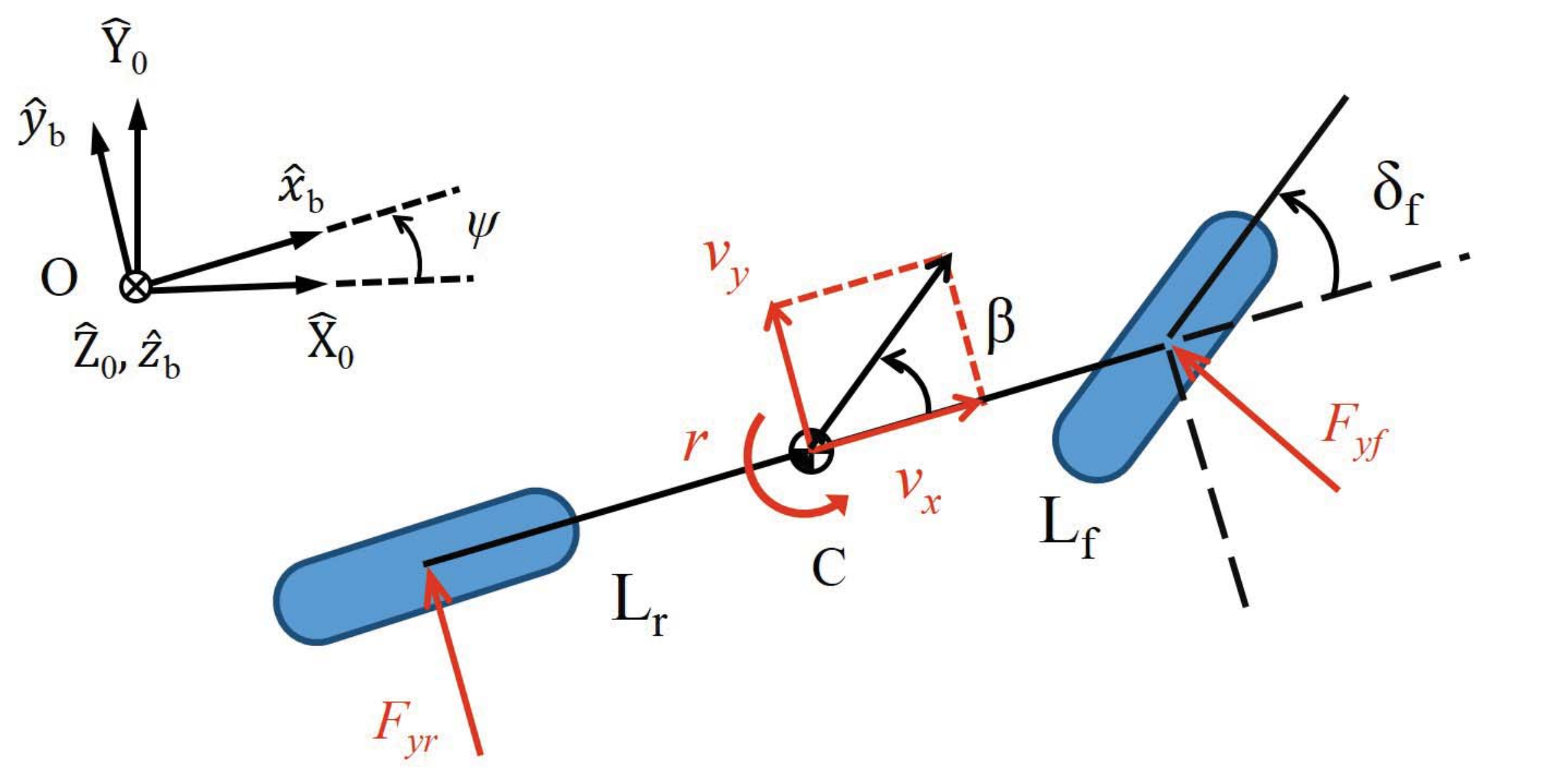}
	\caption{Lateral dynamics for bicycle model.} \label{Fig:dynamicsModel}
\end{figure}
\section{Models for estimation design}
Many of the conventional sideslip angle estimation methods are model-based.
In this section, we are going to introduce two different models which have been frequently used in the literature \cite{peng1experimental}: a 2-DOF point mass rigid body kinematics model and a bicycle dynamics model. Each of them has its own strengths and weaknesses in the observer design. Details are discussed in the following sections. 
\subsection{Kinematics model}
Kinematics is the study of motion which treats the movement of components without considering the forces. To describe the general motion of a rigid body, we first introduce two coordinate systems followed by the ISO convention: one is fixed in the inertial space $\{\hat{X}_0,\hat{Y}_0.\hat{Z}_0\}$ and the other one is fixed to the body $\{\hat{x}_b,\hat{y}_b,\hat{z}_b\}$ (see Fig. \ref{Fig:dynamicsModel}). 
Then, considering the vehicle as a single rigid body constrained to move in the $\hat{X}_0\hat{Y}_0$-plane, the translation motion is described as:
\begin{align}
\label{eqn:kinematicModel}
\begin{split}
a_x = \dot{v}_x-rv_y\\
a_y = \dot{v}_y+rv_x
\end{split}
\end{align}
where $\psi$ is the yaw angle, $r \overset{\Delta}{=} \dot{\psi}$ is the yaw rate of the vehicle. $a_x$ and $v_x$ denote the inertial acceleration and velocity resolved in the longitudinal $\hat{x}_b$-direction. $a_y$ and $v_y$ denote the same physical quantities but resolved in the lateral $\hat{y}_b$-direction. Define the sideslip angle as
\begin{align}
    \beta = \tan^{-1}(v_y/v_x). \nonumber
\end{align}
We write the system (\ref{eqn:kinematicModel}) into a standard state space form as:
\begin{align}
\label{eqn:stateSpaceKin}
\begin{split}
\dot{\mathbf{x}}_k&=A_k(t)\mathbf{x}_k+B_k(t)\mathbf{u}_k\\
\mathbf{y}_k &= C_k(t)\mathbf{x}_k
\end{split}
\end{align}
where $\mathbf{x}_k = [v_x\text{,}~v_y]^T$ is the state vector, $\mathbf{u}_k = [a_x,~a_y]^T$ is the control input vector, $\mathbf{y}_k = v_x$ is the measurement output vector and the system, input and output matrices are
\begin{align}
\begin{split}
A_k(t) &= \begin{bmatrix}
0 &~ r(t)\\
-r(t) &~ 0
\end{bmatrix},~~
B_k(t) = \begin{bmatrix}
1 & 0\\
0 & 1
\end{bmatrix}\\
C_k(t) &= \begin{bmatrix}
1~~0
\end{bmatrix}. 
\end{split}
\label{eq:kinMatrix}
\end{align}

As explained in \cite{ungoren2004study}, using the kinematics model is advantageous as it allows a sideslip angle estimation without requiring vehicle parameters. All we need is $r$, $a_x$, $a_y$ and $v_x$ which can be directly obtained from sensors available in commercial vehicles. However, the estimation is sensitive to sensor noise which is substantial for $a_x$ and $a_y$. Moreover, the convergence of the estimation error can be guaranteed only when yaw rate is not equal to zero. In fact, the system (\ref{eqn:stateSpaceKin}) is not observable when the yaw rate is equal to zero and the poor conditioning of the observability matrix causes a drifting problem. This can be avoided by resetting the estimated states to zero every time when yaw rate is less than a threshold value.
\subsection{Dynamics model}
A variety of dynamics models have appeared in the literature.
The so-called lateral bicycle model shown in Fig. \ref{Fig:dynamicsModel} is a widely used and rather simple model that neglects the coupling of the roll, pitch and longitudinal dynamics.    
By using Newton's law of motion, the lateral dynamics of the bicycle model is described as follows \cite{rajamani2011vehicle}:
\begin{align}
\label{eq:dynamicsnModel}
\begin{split}
ma_y&=m(\dot{v}_y+v_xr) = F_{yf}\cos\delta_f+F_{yr}\\
I_z\dot{r} &= L_fF_{yf}\cos\delta_f-L_rF_{yr}
\end{split}
\end{align}
where $m$ is the vehicle mass, $I_z$ is the equivalent yaw moment of inertia, $\delta_f$ is the front steering angle and $L_f$, $L_r$ are the distance from the vehicle center of gravity (COG) to the front and rear axles. To further simplify the model, we assume small tire slip and front steering angles. Then, the front and the rear lateral tire forces $F_{yf}$, $F_{yr}$ can be approximated by a linear function:   
\begin{align}
\label{eq:tireForces}
\begin{split}
F_{yf}\cos\delta_f\approx F_{yf}  &= C_f\left(\delta_f-\frac{v_y+L_fr}{v_x}\right)\\
F_{yr} &= C_r\left(\frac{-v_y+L_rr}{v_x}\right)
\end{split}
\end{align}
where $C_f$ and $C_r$ are the front and rear tire cornering stiffnesses. Substituting (\ref{eq:tireForces}) into (\ref{eq:dynamicsnModel}), we then obtain a nonlinear model.
Assume that the vehicle is traveling with slowly varying longitudinal velocity. 
At each step, a linearization process will be applied to approximate the nonlinear system (\ref{eq:dynamicsnModel})-(\ref{eq:tireForces}) with a linear time varying system shown as follows:
\begin{align}
\label{eq:dynstate}
\begin{split}
\dot{\mathbf{x}}_d&=A_d(t)\mathbf{x}_d+B_d(t)\mathbf{u}_d \\
\mathbf{y}_d &= C_d(t)\mathbf{x}_d+D_d(t)\mathbf{u}_d 
\end{split}
\end{align}
where $\mathbf{x}_d = [v_y,~r]^T$ is the state vector, $\mathbf{u}_d = \delta_f$ is the control input vector and $\mathbf{y}_d = [a_y,~r]^T$ is the measurement output vector.
\begin{align}
\label{eq:stateDynMatrix}
\begin{split}
A_d(t) &= \begin{bmatrix}
\frac{-C_f-C_r}{mv_x(t)} & -v_x(t)-\frac{L_fC_f-L_rC_r}{mv_x(t)}\\
\frac{-L_fC_f+L_rC_r}{I_{z}v_x(t)} & \frac{-L_f^2C_f-L_r^2C_r}{I_{z}v_x(t)}
\end{bmatrix},\\
C_d(t) &= \begin{bmatrix}
\frac{-C_f-C_r}{mv_x(t)} ~&~ -\frac{L_fC_f-LrC_r}{mv_x(t)}\\
0 ~&~ 1 
\end{bmatrix},\\
B_d(t) &= \begin{bmatrix}
\frac{C_f}{m}\\
\frac{L_fC_f}{I_{z}}
\end{bmatrix},~~~
D_d(t) = \begin{bmatrix}
\frac{C_f}{m}\\
0
\end{bmatrix}.~~~~~
\end{split}
\end{align}

Using the dynamics bicycle model to estimate the sideslip angle has several advantages. First, the estimator can be tuned to be less sensitive to acceleration measurement noise compared to the one based on the kinematics model. Also, drifting and observability issues of the kinematics model are not present. However, the estimated accuracy is affected by the vehicle parameters in the matrices (\ref{eq:stateDynMatrix}).
First, since we use a linear tire model, the sideslip estimate will be accurate only in the linear tire region. Second, compared to $m$, $I_z$, $L_f$ and $L_r$, it is hard to find a good initial condition for the tire cornering stiffness coefficients. To mitigate this issue,  on-line adaptation algorithms have been introduced to identify the cornering stiffness \cite{you2009new,liu1998state}. We will also use this idea in our method.
\section{New sideslip estimation method}
The method proposed in this paper relates to the idea of \cite{chen2008sideslip} which merges the kinematics and dynamics model observers into a hybrid solution. Since the estimated state from the kinematics model is unaffected by the parameter uncertainties, in \cite{chen2008sideslip}, the observer is built to mainly rely on it but will switch to the dynamics model when the absolute value of the yaw rate is less than a threshold value $r_t$ to avoid unobservability and the drifting issue. Although this method addresses the drifting issue, relying on the kinematics model leads to noisy estimates.
Moreover, the switch between the kinematics and dynamics models for the observer often introduces a discontinuous estimate during the transition. 

To overcome these issues and keep the benefits of hybrid models, we propose a new method which is based on a dynamics model but adapts on-line the front and rear tire cornering stiffnesses using information derived from the kinematics model.  
Compared with the traditional adaptation algorithm proposed in \cite{you2009new,liu1998state}, the proposed approach does not need persistent excitation in the control input and also improves the adaptation performance in the nonlinear tire region. For the observer design, we further include the road bank angle disturbance and lateral acceleration sensor bias into the system model in order to minimize possible modeling and estimation errors. This is discussed in next.
\subsection{Augmented Models}
\subsubsection{Dynamics model augmented with the road bank angle and sensor bias}
We consider the bicycle model and include the gravitational force to the lateral dynamics: 
\begin{align}
m(\dot{v}_y+v_xr) = F_{yf}\cos\delta_f+F_{yr}-mg\sin\phi \label{eq: lateral dynamics with bank}
\end{align}
where $\phi$ is the road bank angle with the sign convention shown in Fig. \ref{Fig:bank}. Then, combining the yaw dynamics in (\ref{eq:dynamicsnModel}) with (\ref{eq: lateral dynamics with bank}), we rewrite the first equation of (\ref{eq:dynstate}) as:
\begin{align}
\dot{v}_y = \frac{-(C_f+C_r)}{mv_x}v_y - (v_x+&\frac{L_fC_f-L_rC_r}{mv_x})r \nonumber\\ &~~~~~-g\sin\phi+\frac{C_f}{m}\delta_f. \label{eq:vy_dot}
\end{align}  
The measurement model should also be corrected with the bank disturbance and sensor bias as well. Note that the lateral accelerometer measures the right hand side of (\ref{eq: lateral dynamics with bank}) divided by $m$ and plus the component of gravity in $\hat{y}_b$ direction. We obtain the measurement model of the lateral acceleration as:   
\begin{align}
a_{y}^{sen}& = a_y + g\sin\phi+d = \dot{v}_y+v_xr + g\sin\phi+d \label{eq:aySen}\\
&=\frac{-(C_f+C_r)}{mv_x(t)}v_y-\frac{L_fC_f-L_rC_r}{mv_x(t)}r+d+\frac{C_f}{m}\delta_f.\nonumber
\end{align}   
where $d$ is the sensor bias. By augmenting the system with a constant bank angle disturbance and the sensor bias, the state vector and measurement output are $\mathbf{x}_d = [v_y,~r,~\sin\phi,~~d]^T$ and $\mathbf{y}_d = [a_y^{sen},~r]^T$. The system state space matrices which replace the one in (\ref{eq:stateDynMatrix}) are          
\begin{align}
A_d(t) &= \begin{bmatrix}
\frac{-C_f-C_r}{mv_x(t)} & -v_x(t)-\frac{L_fC_f-L_rC_r}{mv_x(t)} & -g & 0\\
\frac{-L_fC_f+L_rC_r}{I_{z}v_x(t)} & \frac{-L_f^2C_f-L_r^2C_r}{I_{z}v_x(t)} & 0 & 0\\
0 & 0 & 0 & 0\\
0 & 0 & 0 & 0
\end{bmatrix},\nonumber\\
C_d(t) &= \begin{bmatrix}
\frac{-C_f-C_r}{mv_x(t)} & -\frac{L_fC_f-L_rC_r}{mv_x(t)}& 0& 1\\
0 & 1& 0& 0 
\end{bmatrix}, \label{eq:stateDynMatrix2} \\
B_d(t) &= \begin{bmatrix}
\frac{C_f}{m}\\
\frac{L_fC_f}{I_{z}}\\
0\\
0
\end{bmatrix},~~~~~~
D_d(t) = \begin{bmatrix}
\frac{C_f}{m}\\
0
\end{bmatrix}.~~~~~\nonumber
\end{align}
To implement the extended Kalman filter using a digital controller, we further discretized model (\ref{eq:stateDynMatrix2})
using a forward Euler method as:
\begin{align}
\hat{\mathbf{x}}_d[k+1]&=(A_d[k]\Delta t+\text{I}_4)\hat{\mathbf{x}}_d[k]+B_d[k]\Delta t\mathbf{u}_d[k]+\mathbf{w}_d[k]\nonumber\\
\mathbf{y}_d[k] &= C_d[k]\hat{\mathbf{x}}_d[k]+D_d[k]\mathbf{u}_d[k]+\mathbf{v}_d[k]\label{eq:dyn discrete-time model}
\end{align}  
where $\Delta t$ is the sampling period and [$\cdot$] represent the discrete time instant. $\mathbf{w}_d[\cdot]$ and $\mathbf{v}_d[\cdot]$ are the process and measurement noises satisfying the typical assumptions of the extended Kalman filter. 
\begin{figure}
        \centering
		\includegraphics[width = 2.8in]{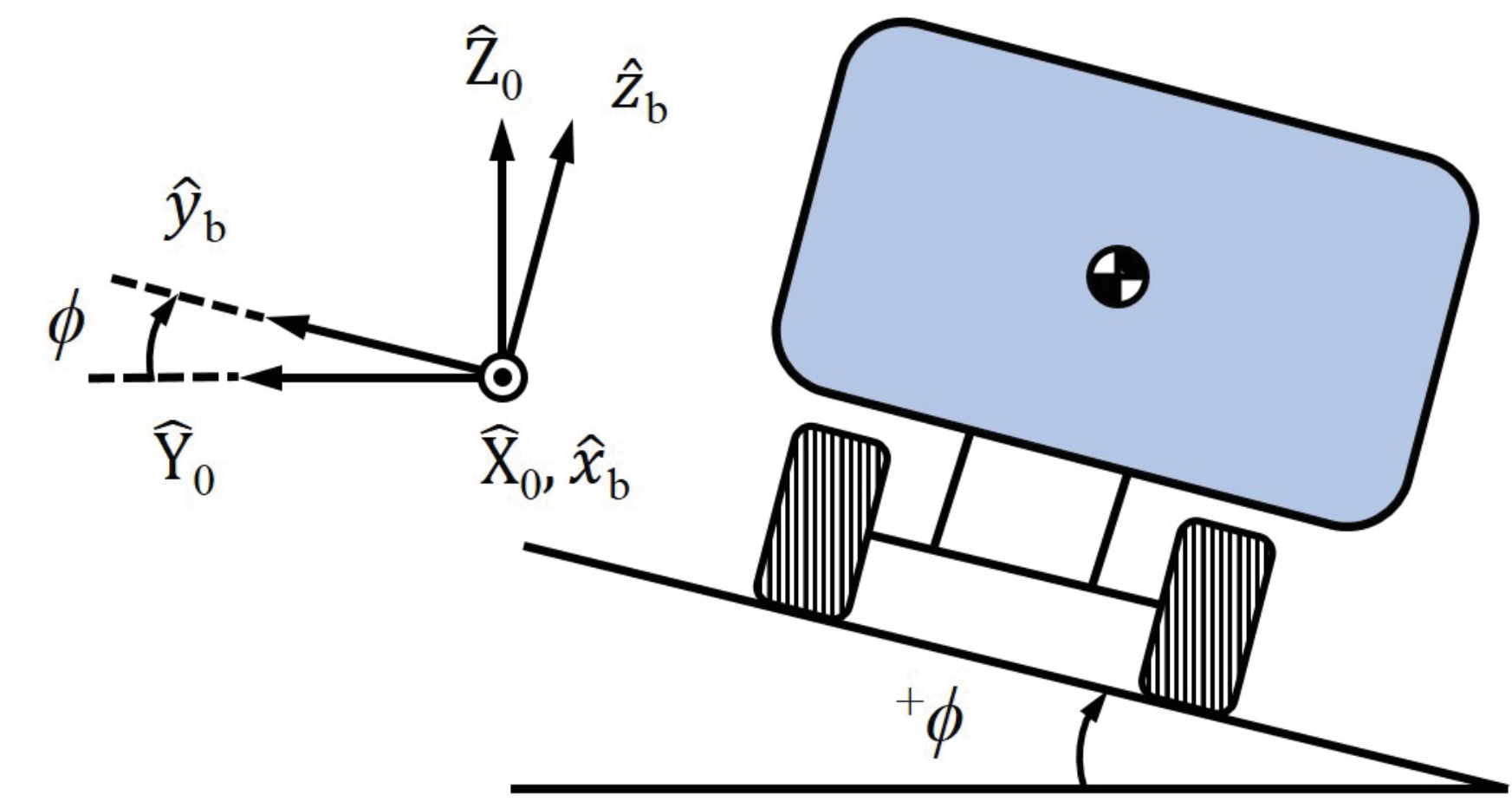}
		\caption{Sign convention for bank angle.} \label{Fig:bank}
\end{figure}
\begin{algorithm*}
	\centering
	\caption{Sideslip Angle Estimation}
	\begin{algorithmic}[1]
		\Initialize{$\hat{\mathbf{x}}_{k}[0] \gets [v_x[0]~v_y[0]~0]^T$,~~$\hat{\mathbf{x}}_{d}[0] \gets [v_y[0]~r[0]~0~~0]^T$,~~
		$\tilde{\theta}^*_0 \gets 0$,~~ $R_0 \gets 0$,~~$\theta_0^* \gets {\theta}^++\tilde{\theta}^*_0$,}
		\State ~~~~$P_k[0] \gets P_{k,0}$,~~$P_d[0] \gets P_{d,0}$~~~~~~~~~~~~~~~~~~~~~//\textit{ initialize prior means and estimate error covariance matrices for} \text{EKF}
		\For{i = 1 to k+1}
		\State $\hat{\mathbf{x}}_{d}[i] \gets$ EKFupdate($\hat{\mathbf{x}}_{d}[i-1],\mathbf{u}_{d}[i-1],\mathbf{u}_{d}[i],\mathbf{y}_{d}[i],P_d[i-1],\theta^*_{i-1}$)~~~~~~// EKF \textit{update with model (\ref{eq:dyn discrete-time model})}
        \State $\hat{\mathbf{x}}_{k}[i] \gets$ EKFupdate($\hat{\mathbf{x}}_{k}[i-1],\mathbf{u}_{k}[i-1],\mathbf{y}_{k}[i],P_k[i-1]$))~~~~~~~~~~~~~~~~~~// EKF \textit{update with model (\ref{eq:kin discrete-time model})}
		\If{$|r_i| \geq r_t$}
		\State $R_{i} = \lambda R_{i-1}+\Phi_{i}\Phi_{i}^T$~~~~~~~~~~~~~~~~~~~~~~~~~~~//\textit{ obtain the input measurement $\Phi_{i}$ from (\ref{eq:regression model})}
		\State $\theta^*_i \gets {\theta}^++$ AdaptationUpdate($R_i, \tilde{\theta}^*_{i-1}$)~~~~~~//\textit{ apply the recursive update law (\ref{eq:recursive adapt law}) for the tire cornering stiffnesses}
		\Else 
		\State $\theta^*_i \gets \theta^*_{i-1}$ 
		\State $\hat{\mathbf{x}}_{k}[i] \gets [v_x[i]~\hat{v}_{y,d}[i]~\sin\hat{\phi}_d[i]]^T$ ~~~~~~~~~~~~//\textit{ update the state estimates of the} \text{EKF} \textit{for model (\ref{eq:kin discrete-time model})}
		\State $P_k[i] \gets \text{diag}(0,P_d[i](1,1),P_d[i](3,3))$ ~~~~~//\textit{ update error covariance matrix of the} \text{EKF} \textit{for model (\ref{eq:kin discrete-time model})}
		\EndIf        
		\State $\beta[i] \gets \tan^{-1}( \hat{v}_{y,d}[i]/v_x[i])$~~~~~~~~~~~~~~~~~~~~~~~//\textit{ calculate sideslip angle}
		\EndFor
	\end{algorithmic}
\end{algorithm*}
\subsubsection{Kinematics model augmented with the road bank angle}
For the kinematics model, we only include the bank angle disturbance into the lateral motion by noting that the system is not observable if the model of the sensor bias is added. The model becomes:
\begin{align}
a_x & = \dot{v}_x-rv_y \nonumber\\
a_y^{sen} & = \dot{v}_y+rv_x+g\sin\phi. \label{eqn:kinematicModel1}
\end{align}  
Then, having the same extended Kalman filter structure shown in (\ref{eq:dyn discrete-time model}), we re-define the state vector $\mathbf{x}_k = [v_x,~v_y,~\sin\phi]^T$, the output vector $\mathbf{u}_k = [a_x,~a_y^{sen}]^T$ and the system state space matrices as
\begin{align}
\label{eq:stateKinMatrix2}
\begin{split}
A_k(t) &= \begin{bmatrix}
~0 & r(t) & 0\\
-r(t) & 0 & -g\\
~0 & 0 & 0
\end{bmatrix},~~~
B_k(t) = \begin{bmatrix}
~1 & 0~\\
~0 & 1~
\end{bmatrix},\\
C_k(t) &= \begin{bmatrix}
~1 ~&~ 0 ~&~ 0~ 
\end{bmatrix}.
\end{split}
\end{align}  
The above model is, again, discretized into:
\begin{align}
\hat{\mathbf{x}}_k[k+1]&=(A_k[k]\Delta t+\text{I}_3)\hat{\mathbf{x}}_k[k]+B_k[k]\Delta t\mathbf{u}_k[k]+\mathbf{w}_k[k]\nonumber\\
\mathbf{y}_k[k+1] &= C_k[k+1]\hat{\mathbf{x}}_k[k+1]+\mathbf{v}_k[k+1].\label{eq:kin discrete-time model}
\end{align} 
\subsection{Adaptation for the tire cornering stiffness}
In the previous section, we have introduced two observer models (\ref{eq:dyn discrete-time model}) and (\ref{eq:kin discrete-time model}). Next, we will show how we merge both observers by using $\hat{v}_{y,k}$, the lateral velocity estimated from (\ref{eq:kin discrete-time model}), to adapt the front and rear tire cornering stiffnesses in the dynamics model (\ref{eq:dyn discrete-time model}). The sideslip estimation will then calculate by using this updated dynamics model. 
\subsubsection{Regression model}
The adaptation is formulated as a regularized weighted least square (RWLS) problem \cite{Tibshirani94regressionshrinkage, eksioglu2011rls}. To build up the adaptation algorithm, we first specify the regression model as
\begin{align}
Y = \Phi^T\theta 
\label{regmodel}
\end{align}
where $\theta$ is the parameter to be estimated; $\Phi$ and $Y$ are the input and output measurements.
Substituting equation (\ref{eqn:kinematicModel1}) into (\ref{eq:vy_dot}), the $a_y$ measurement can be expressed as follows:  
\begin{align}
a_y^{sen}&=\dot{v}_y+v_xr+ g\sin\phi\nonumber\\
&= \frac{-(C_f+C_r)}{mv_x}v_y - \frac{L_fC_f-L_rC_r}{mv_x}r +\frac{C_f}{m}\delta_f.\label{eq:ay_sen}
\end{align}
Then, combining (\ref{eq:ay_sen}) with the yaw rate dynamics, we define the regression model as:
\begin{align}
\label{eq:regression model}
\begin{split}
	\Phi^T&=\begin{bmatrix}
	\frac{-L_f^2r-L_f\hat{v}_{y,k}}{v_{x}}+L_f\delta_{f} &~ \frac{-L_r^2r+L_r\hat{v}_{y,k}}{v_{x}}\\
	\frac{-L_fr-\hat{v}_{y,k}}{v_{x}}+\delta_{f} &~ \frac{L_rr-\hat{v}_{y,k}}{v_{x}}
	\end{bmatrix},\\
	Y&=\begin{bmatrix}
	I_{z}\dot{r} \\
	ma_{y}^{\text{sen}}
	\end{bmatrix},~~~~~ 
	\theta =\begin{bmatrix}
	C_{f} \\
	C_{r}
	\end{bmatrix}
\end{split}
\end{align}
where the unknown lateral velocity is replaced by $\hat{v}_{y,k}$ estimated from the kinematics model.
Observe that all the other time-varying variables in the input and output measurements can be directly obtained from the standard sensors for yaw stability control system. The angular acceleration is obtained by differentiating the yaw rate: $(r[k]-r[k-1])/\Delta t$ with a low-pass filter.
\subsubsection{Adaptation algorithm}
Considering all the input and output data sampled at time instant $i\Delta t$, where $i = 1,2,...k$ is the time step, we want to minimize the sum of the squared prediction errors:
\begin{align}
J(\theta_{k}) = \sum_{i=1}^{k} \lambda^{(k-i)}\left\|Y_i-\Phi_i^T\theta_{k}\right\|_2^2 +\delta \left\|\theta_{k}-{\theta}^+ \right\|_2^2 \label{eq:cost}
\end{align}   
where $0 \ll \lambda < 1$ is the forgetting factor and ${\theta}^+=
[{C}_f^+~ {C}_r^+]^T$ is the nominal values of the front and rear tire cornering stiffnesses. 
Comparing (\ref{eq:cost}) with a standard least square problem, we have included an additional 2-norm regularized term with $\delta > 0$ in order to improve the estimate robustness when the data is less informative or too noisy.   
By setting the partial derivative of $J(\theta_{k})$ with respect to $\theta_{k}$ to zero, the optimal solution, $\theta_{k}^*$, can be derived as follows:
\begin{align}
& \theta_{k}^*=\left(\sum_{i = 1}^{k}\lambda^{k-i}\Phi_i\Phi_i^T+\delta \text{I}_2\right)^{-1}\left(\delta \theta^++\sum_{i=1}^{k}\lambda^{k-i}\Phi_iY_i\right)\nonumber
\end{align}
which implies
\begin{align}
\quad \tilde{\theta}^*_{k} =& \left(\sum_{i = 1}^{k}\lambda^{k-i}\Phi_i\Phi_i^T+\delta \text{I}_2\right)^{-1} \sum_{i=1}^{k}\lambda^{k-i}\Phi_i\tilde{Y}_i \label{eq:optimalSol}\\
\text{where} \quad\quad &\tilde{\theta}_i :=  \theta_i-{\theta}^+,~~~~\tilde{\theta}_{k}^* := \theta_{k}^*-{\theta}^+ \nonumber\\
&\tilde{Y}_i := Y_i-\Phi_i^T{\theta}^+ = \Phi_i^T\tilde{\theta}_i. \nonumber
\end{align}
The expression in (\ref{eq:optimalSol}) is called the batch formulation since it processes the available data set all at once. For simplicity, we can further rewrite the solution in a recursive way as:
\begin{align}
\tilde{\theta}^*_{k} = \tilde{\theta}^*_{k-1}+(R_{k}&+\delta\text{I}_2)^{-1}\left[\delta(\lambda-1)\tilde{\theta}^*_{k-1}+\Phi_{k}e_{k}\right]. \label{eq:recursive adapt law}\\
\text{where~~~~} R_{k} = \sum_{i=1}^{k}&\lambda^{k-i}\Phi_i\Phi_i^T = \lambda R_{k-1}+\Phi_{k}\Phi_{k}^T \label{eq:adaptation gain}\\
e_{k} = \tilde{Y}_{k}&-\Phi^T_{k}\tilde{\theta}_{k-1}^*\nonumber
\end{align}
and $k = 1, 2,...,\infty$. Notice that $(R_{k+1}+\delta\text{I}_2)^{-1}$ in (\ref{eq:recursive adapt law}) is a simple  2-by-2 matrix inversion and the existence of the solution is guaranteed by the regularization term. The recursive formula in (\ref{eq:adaptation gain}) of the adaptation gain $R_{k}$ will help us better understand the stability properties of the adaptation algorithm \cite{landau1990system}.
More details will be discussed in Section 5. In the next section, we summarize the new proposed algorithm for the sideslip angle estimation. 
\subsection{Proposed sideslip angle estimation algorithm}
We have presented the discrete-time dynamics and kinematics observer models in (\ref{eq:dyn discrete-time model}) and (\ref{eq:kin discrete-time model}), respectively.
At each time step, both of the estimated states will be updated using the extended Kalman filters \cite{song1992extended}. The dynamics model is used for estimating the sideslip angle and the kinematics model is used for estimating the tire cornering stiffnesses by applying the adaptation law (\ref{eq:recursive adapt law}). 
Notice that we will enable the adaptation process only when the absolute value of the yaw rate is greater than a certain threshold, $r_t$, in order to have a valid estimated $\hat{v}_{y,k}$ from the kinematics model.
The pseudo code of the estimation algorithm is provided in Algorithm 1.
\begin{figure}[t]
	\begin{center}
		\includegraphics[width = 2.8in]{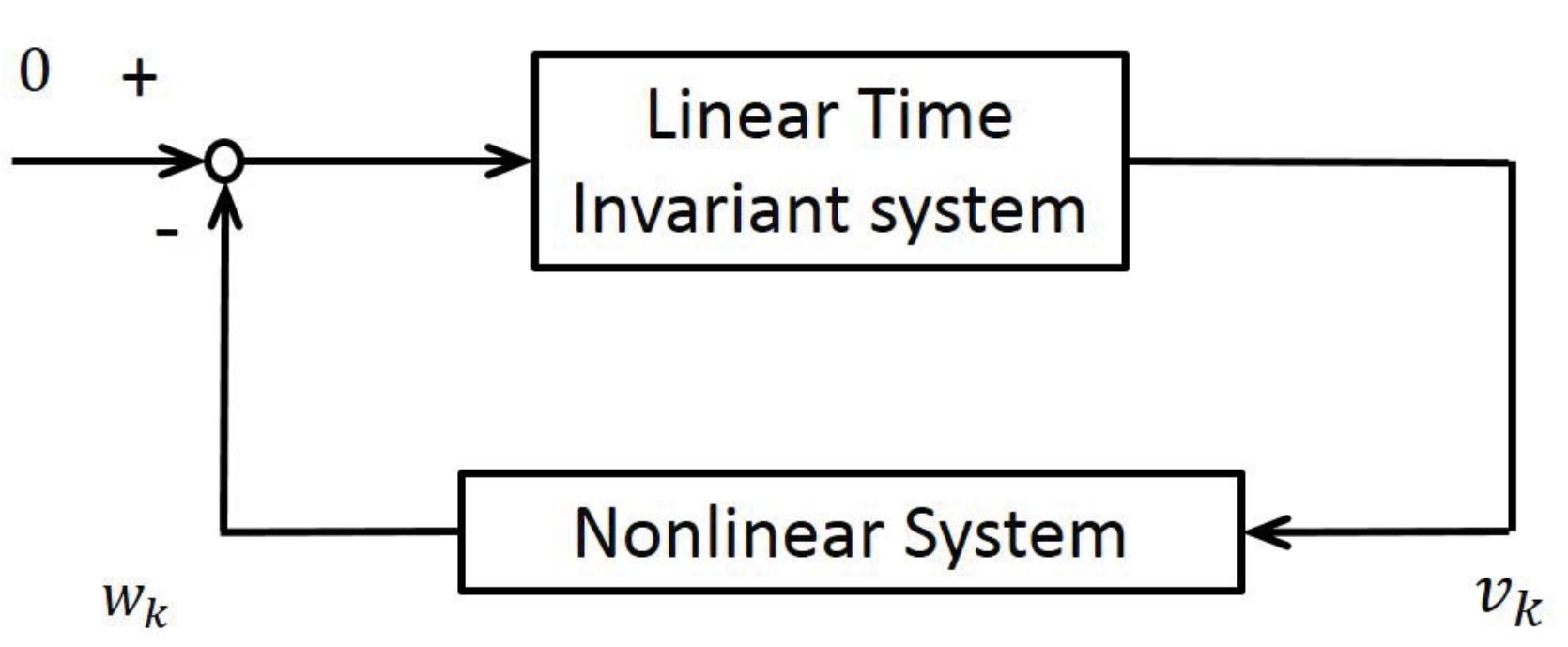}
		\caption{Nonlinear feedback system.} \label{Fig:HyperStabilityBlock}
	\end{center}
\end{figure}
\section{Stability and convergence analysis}
In this section, we study the stability of the proposed adaptation algorithm. In other words, we want to study the convergence property of the tire cornering stiffness estimation error. The energy-based hyperstability approach \cite{anderson1968simplified} is used as it addresses the problem nonlinearity. The analysis will follow three steps.
\begin{enumerate}
	\item[A.] Formulate the adaptation algorithm (\ref{eq:recursive adapt law}) as a nonlinear feedback systems shown in Fig. \ref{Fig:HyperStabilityBlock} where $v_k$ will be linked to the error between the measured output $Y_k$ listed in (\ref{eq:regression model}) at time $k$ and the one predicted according to the adaptation law $\Phi_k^T\theta^*_k$.
	\item[B.] Prove that $v_k$ converges to $0$ for $k\rightarrow\infty$ by using the sufficient conditions of the hyperstability listed in Appendix A. 
	\item[C.] Use the previous results to prove the convergence of the estimated tire cornering stiffnesses to a neighbourhood of the true ones.
\end{enumerate}  
\subsection{Nonlinear feedback formulation of the adaptation algorithm}
We start by formulating the adaptation algorithm (\ref{eq:recursive adapt law}) as a nonlinear feedback system. We first derive a recursive law for the adaptation gain, $(R_{k}+\delta\text{I}_2)^{-1}$. Define $F_{k} = (R_{k}+\delta\text{I}_2)^{-1}$ and substitute $R_k$ as defined in (\ref{eq:adaptation gain}). We obtain: 
\begin{align}
F_{k} &= [\lambda F_{k-1}^{-1}+\delta(1-\lambda)\text{I}_2+\Phi_{k}\Phi_{k}^T]^{-1}. \label{eq:22}
\end{align}
Then, denote $(\sigma_{1,k},\bm{u}_{2,k})$, $(\sigma_{2,k},\bm{u}_{2,k})$ as the first and the second pairs of the singular value and the left singular vector of the matrix $\Phi_{k}$. We can rewrite $\Phi_{k}\Phi_{k}^T$ as
\begin{align}
\Phi_{k}\Phi_{k}^T & = \sum_{j = 1}^{2} \sigma_{j,k}^2\bm{u}_{j,k}\bm{u}_{j,k}^T = \sum_{j=1}^{2} \bm{\phi}_{j,k}\bm{\phi}_{j,k}^T\nonumber
\end{align}
and further represent $\delta(1-\lambda)\text{I}_2+\Phi_{k}\Phi_{k}^T$ as:
\begin{align}
&\delta(1-\lambda)\text{I}_2+\Phi_{k}\Phi_{k}^T = \sum_{j=1}^{2} \mu_{j,k}\bm{\phi}_{j,k}\bm{\phi}_{j,k}^T\label{eq:23}\\
\text{with} \quad&\quad \mu_{j,k} = \frac{\sigma_{j,k}^2+\delta(1-\lambda)}{\sigma_{j,k}^2} \geq 1 \quad \text{for }j = 1, 2.\nonumber
\end{align}
Here, $\mu_{j,k}$ is guaranteed to be finite and always exist because $\sigma_{j,k} >0$. From the expression of $\Phi^T_{k}$ in (\ref{eq:regression model}), we observe that  $\Phi^T_{k}$ is always full rank with exception of singular cases which can be easily discarded in real applications.   
Combing (\ref{eq:22}) and (\ref{eq:23}), we obtain a measurement updated law of the adaptation gain by applying the matrix inverse lemma:  
\begin{align}
F_{k} &= [\lambda F_{k-1}^{-1}+\mu_{1,k}\bm{\phi}_{1,k}\bm{\phi}_{1,k}^T+\mu_{2,k}\bm{\phi}_{2,k}\bm{\phi}_{2,k}^T]^{-1}\nonumber\\
& = F_{k}' - \frac{F_{k}'\bm{\phi}_{2,k}\bm{\phi}_{2,k}^TF_{k}'}{\mu_{2,k}^{-1}+\bm{\phi}_{2,k}^TF_{k}'\bm{\phi}_{2,k}} \label{eq: adapt gain update law 1}\\
\text{where} \quad F_{k}' & = \frac{1}{\lambda}\left(F_{k-1}-\frac{F_{k-1}\bm{\phi}_{1,k}\bm{\phi}_{1,k}^TF_{k-1}}{\lambda\mu_{1,k}^{-1}+\bm{\phi}_{1,k}^TF_{k-1}\bm{\phi}_{1,k}}\right). \label{eq: adapt gain update law 2}
\end{align}
Notice that the updated law of the adaptation gain (\ref{eq: adapt gain update law 1})-(\ref{eq: adapt gain update law 2}) contains two parts. First, $F_{k-1}$ is updated with the first singular vector of the input measurement data, $\phi_{1,k}$ to yield $F_{k}'$. Then $F_{k}'$ is updated based on the second singular vector, $\phi_{2,k}$. 
For this reason, the original sampling time  $k=1,..., T$ is now converted into $n=1,..., 2T$, where $k=\ceil{\frac{n}{2}}$. This will allow us to use the hyperstability theorem which is formulated for SISO systems (Appendix A). Substitute (\ref{eq: adapt gain update law 1}) into (\ref{eq:recursive adapt law}) to obtain:
\begin{align}
\tilde{\theta}^*_{n} &=  \tilde{\theta}^*_{n-1} -\beta_{n}\bm{f}_{n}\bm{\phi}_{n}\bm{\phi}_{n}^T\tilde{\theta}^*_{n-1}+\bm{f}_{n}\bm{\phi}_{n}\tilde{y}_{n}\nonumber\\
&= \tilde{\theta}^*_{n-1} + \bm{f}_{n}\bm{\phi}_{n}(\tilde {y}_{n}-\beta_{n}\bm{\phi}_{n}^T\tilde{\theta}_{n-1}^*)\label{eq:single update law}
\end{align}
\begin{align}
\text{where } \quad \bm{f}_{n}^{-1} &=\alpha_{n}\bm{f}_{n-1}^{-1}+\beta_{n}\phi^T_{n}\phi_{n},~~~~\bm{f}_{0}^{-1} = 0\nonumber\\
\bm{f}_{n} &= \frac{1}{\alpha_{n}}\left(\bm{f}_{n-1}-\frac{\bm{f}_{n-1}\bm{\phi}_{n}\bm{\phi}_{n}^T\bm{f_{n-1}}}{\alpha_{n}\beta_{n}^{-1}+\bm{\phi}_{n}^T\bm{f}_{n-1}\bm{\phi}_{n}}\right)\label{eq:sigle adaptation gain}\\
\tilde{y}_{n} &= \phi_{n}^T\tilde{\theta}_{n},~~~~~\tilde{\theta}_{0}^*=0,~~~~~\alpha_1 = 1. \nonumber\\
\begin{split}
\bm{\phi}_{n} &= \begin{cases}
\bm{\phi}_{1,\ceil{\frac{n}{2}}},&\\
\bm{\phi}_{2,\ceil{\frac{n}{2}}},&
\end{cases}~~~~~
\beta_{n} = \begin{cases}
\mu_{1,\ceil{\frac{n}{2}}},&\\
\mu_{2,\ceil{\frac{n}{2}}},&
\end{cases}\\
\alpha_{n} &= \begin{cases}
\lambda, & \quad \text{if } n \text{ is odd}\\
1, & \quad \text{if } n \text{ is even.}
\end{cases}\label{eq:alphaNbetaDef}
\end{split}
\end{align}
Notice that $\tilde{y}_{n}$ is the measured output and $\bm{\phi}_{n}^T\tilde{\theta}_{n-1}^*$ is the predicted one according to (\ref{eq:single update law}). We then define
\begin{align}
\varepsilon_{n} &= \tilde{y}_{n}-\beta_{n}\bm{\phi}_{n}^T\tilde{\theta}_{n}^*\label{a-posteriori error}\\
\varepsilon^o_{n} &=  \tilde{y}_{n}-\beta_{n}\bm{\phi}_{n}^T\tilde{\theta}_{n-1}^*\nonumber
\end{align}
\begin{figure}[h]
	\centering
	\includegraphics[width = 3.5 in]{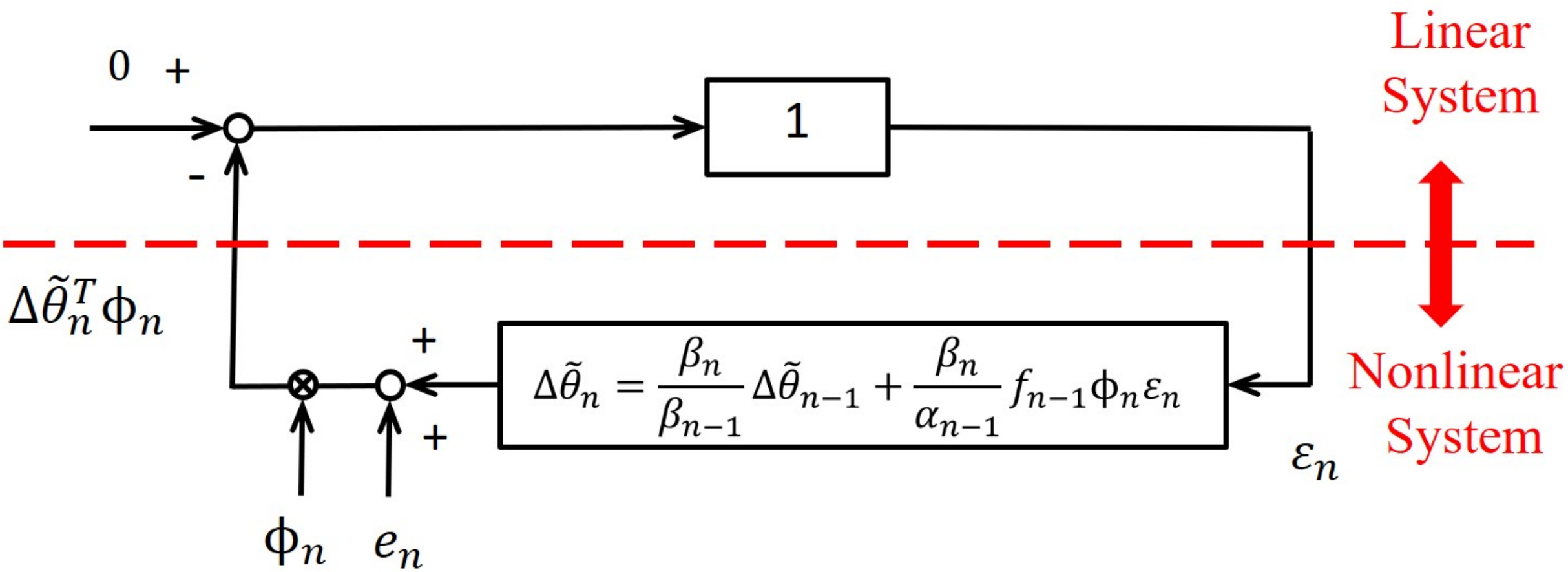}
	\caption{Block diagram of the adaptation algorithm for stability analysis.} \label{Fig:PAAblock}
\end{figure}
as a ``\textit{scaled}" a-posteriori and a ``\textit{scaled}" a-priori predicted measurement errors, respectively. Right multiplying $\bm{\phi}_{n+1}$ to $\bm{f}_{n}$ in equation (\ref{eq:sigle adaptation gain}), we obtain 
\begin{align}
\bm{f}_{n}\phi_{n} = \frac{\bm{f}_{n-1}\bm{\phi}_{n}}{\alpha_{n}+\beta_{n}\bm{\phi}_{n}^T\bm{f}_{n-1}\bm{\phi}_{n}}
\end{align}
and the adaptation law (\ref{eq:single update law}) becomes 
\begin{align}
\tilde{\theta}^*_{n}= \tilde{\theta}^*_{n-1} + \frac{\bm{f}_{n-1}\bm{\phi}_{n}}{\alpha_{n}+\beta_{n}\bm{\phi}_{n}^T\bm{f}_{n-1}\bm{\phi}_{n}}\varepsilon^o_{n}.\label{eq:31}
\end{align}
Then, again left multiplying $-\beta_{n}\bm{\phi}_{n}^T$ to (\ref{eq:31}) and adding $\tilde{y}_{n}$ to both sides of the equation lead to:
\begin{align}
\varepsilon_{n} =  \frac{\alpha_{n}}{\alpha_{n}+\beta_{n}\bm{\phi}_{n}^T\bm{f}_{n-1}\bm{\phi}_{n}}\varepsilon^o_{n}. \label{eq:epsilonRelation}
\end{align}
With this relation, we can express the adaptation law (\ref{eq:31}) using the a-posteriori predicted measurement error $\varepsilon_{n}$ as follows:
\begin{align}
\tilde{\theta}^*_{n} = \tilde{\theta}^*_{n-1}+\frac{1}{\alpha_{n}}\bm{f}_{n-1}\bm{\phi}_{n}\varepsilon_{n}. \label{eq:33}
\end{align}
Define the ``scaled" parameter estimation error as
\begin{align}
\Delta\tilde{\theta}_{n} = \beta_{n}\tilde{\theta}^*_{n}-\tilde{\theta}_{n},
\label{eq:34}
\end{align}
with $\tilde{\theta}_{0} = 0$ and $\beta_0 = 1$. We can rewrite (\ref{a-posteriori error}) and (\ref{eq:33})-(\ref{eq:34}) into the following error dynamics:
\begin{align}
\begin{cases}
\varepsilon_{n} = \phi^T_{n}\tilde{\theta}_{n}-\beta_{n}\bm{\phi}_{n}^T\tilde{\theta}_{n}^*=-\phi^T_{n}\Delta\tilde{\theta}_{n}\\
\Delta\tilde{\theta}_{n} = \frac{\beta_{n}}{\beta_{n-1}}\Delta\tilde{\theta}_{n-1}+\frac{\beta_{n}}{\alpha_{n-1}}\bm{f}_{n-1}\phi_{n}\varepsilon_{n}+e_n
\end{cases} \label{eq:35}
\end{align}
where $e_n = \frac{\beta_{n}}{\beta_{n-1}}\tilde{ \theta}_{n-1}-\tilde{\theta}_{n}$ is treated as an external bounded disturbance. 
Finally, we can represent this error dynamics into the block diagram of Fig. \ref{Fig:PAAblock} which is equivalent to the nonlinear feedback system as shown in Fig. \ref{Fig:HyperStabilityBlock}. 
\subsection{Hyperstability analysis}
\begin{theorem}
	The nonlinear feedback system depicted in the block diagram of Fig. \ref{Fig:PAAblock} with the error dynamics described in (\ref{eq:35}) without the external disturbance term $e_n$ is asymptotically hyperstable (i.e. $\varepsilon_n \rightarrow 0$) if
	\begin{align} \frac{2-\alpha_{n+1}}{\beta_{n}}-\frac{1}{\beta_{n-1}} \geq 0 \quad \forall n = 1,2,...\infty. \label{eq:eta condition}
	\end{align}
\end{theorem}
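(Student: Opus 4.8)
The plan is to apply the discrete-time hyperstability theorem recalled in Appendix~A. That theorem certifies asymptotic hyperstability of the feedback interconnection of Fig.~\ref{Fig:PAAblock} as soon as two conditions hold: the linear feedforward block is (strictly) positive real, and the feedback block obeys a Popov-type summation inequality
\[
\eta(0,N) := \sum_{n=0}^{N} \varepsilon_n v_n \geq -\gamma_0^2, \qquad \forall\, N \geq 0,
\]
where $\gamma_0^2$ is finite and independent of $N$, and $v_n$ denotes the feedback signal driving $\Delta\tilde{\theta}_n$. The first line of the error dynamics (\ref{eq:35}), namely $\varepsilon_n = -\phi_n^T\Delta\tilde{\theta}_n$, exhibits the feedforward map as a static passive relation, so its positive-realness is immediate and the whole argument reduces to verifying the Popov inequality.

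To establish that inequality I would introduce the nonnegative storage function $V_n = \Delta\tilde{\theta}_n^T \mathbf{f}_n^{-1}\Delta\tilde{\theta}_n$ (note $\mathbf{f}_n \succ 0$ by construction) and exploit the gain recursion $\mathbf{f}_n^{-1} = \alpha_n\mathbf{f}_{n-1}^{-1} + \beta_n\phi_n\phi_n^T$ implicit in (\ref{eq:sigle adaptation gain}). Substituting the second line of (\ref{eq:35}) with the disturbance $e_n$ removed, together with $\varepsilon_n = -\phi_n^T\Delta\tilde{\theta}_n$, I would evaluate the one-step increment of a suitably $\beta$-scaled version of $V_n$. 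Upon expanding the quadratic forms, the indefinite cross terms coupling $\Delta\tilde{\theta}_{n-1}$ with $\mathbf{f}_{n-1}\phi_n\varepsilon_n$ are meant to cancel against the $\beta_n\phi_n\phi_n^T$ contribution of the gain update, leaving a telescoping part together with a residual proportional to $\varepsilon_n^2$. Summing over $n$ collapses the telescoping part to $V_N - V_0$; since $V_N \geq 0$ this yields $\eta(0,N) \geq -V_0$, identifying $\gamma_0^2 = V_0$.

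The coefficient multiplying $\varepsilon_n^2$ in the residual should turn out to be precisely $\frac{2-\alpha_{n+1}}{\beta_n}-\frac{1}{\beta_{n-1}}$, up to the strictly positive factor $\phi_n^T\mathbf{f}_{n-1}\phi_n$, so hypothesis (\ref{eq:eta condition}) is exactly what renders this residual sign-definite and keeps the sum bounded below; here I would also invoke $\beta_n \geq 1$ and $\alpha_n\in\{\lambda,1\}$ from (\ref{eq:alphaNbetaDef}) to keep the scalings positive and $V_0$ finite. With both hypotheses of the hyperstability theorem met, the theorem delivers boundedness of the loop signals and, through strict passivity of the feedforward block, the stronger conclusion $\sum_n \varepsilon_n^2 < \infty$, whence $\varepsilon_n \to 0$ as $n\to\infty$, which is the claimed asymptotic hyperstability of the disturbance-free dynamics.

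The hard part will be the bookkeeping in this one-step increment. Because $\mathbf{f}_n$, $\alpha_n$ and $\beta_n$ all vary with $n$ --- indeed $\alpha_n$ alternates between $\lambda$ and $1$ and $\beta_n$ switches between the two singular-value ratios $\mu_{1,\cdot}$ and $\mu_{2,\cdot}$ per (\ref{eq:alphaNbetaDef}) --- the $\beta$-scaling relating $V_n$ to $V_{n-1}$ must be chosen so that every indefinite cross term cancels and only the $\varepsilon_n^2$ residual governed by (\ref{eq:eta condition}) survives; a mismatched scaling would leave sign-indefinite remainders and destroy the lower bound. Confirming that the cancellation reproduces \emph{exactly} the coefficient appearing in (\ref{eq:eta condition}), and that $V_0$ is finite, is the delicate point of the proof.
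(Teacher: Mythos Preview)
Your proposal has a genuine gap at the very first step. With the feedforward block equal to the identity, the input to the nonlinear feedback block is $v_n=\varepsilon_n$ and its output is $w_n=\bm{\phi}_n^T\Delta\tilde{\theta}_n=-\varepsilon_n$, so the Popov sum you set out to bound is
\[
\sum_{n=1}^{N} w_n v_n \;=\; -\sum_{n=1}^{N}\varepsilon_n^{\,2},
\]
which is \emph{not} bounded below by any finite constant independent of $N$ unless you already know $\varepsilon_n$ is square-summable --- exactly the conclusion you are after. No choice of storage function $V_n$ can rescue this: a telescoping identity of the form $V_N-V_0=\sum w_nv_n+\text{(nonnegative)}$ would force $\sum\varepsilon_n^2\le V_0$, and one can check (using the recursions for $\bm f_n^{-1}$ and $\Delta\tilde\theta_n$) that the increment of any $\beta$-scaled version of $\Delta\tilde\theta_n^T\bm f_n^{-1}\Delta\tilde\theta_n$ falls short of $-\varepsilon_n^2$ by precisely the missing term $\tfrac{\alpha_{n+1}}{2}\varepsilon_n^2$. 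This is why the paper states explicitly that the nonlinear block of the adaptation algorithm does \emph{not} satisfy the Popov inequality in the original loop.

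The paper's remedy is a loop transformation: add and subtract $\tfrac{\alpha_{n+1}}{2}\bm{\phi}_n^T\Delta\tilde{\theta}_n$ inside the loop (Fig.~\ref{Fig:PAAblock1}), which replaces the identity feedforward by the constant gain $1-\rho/2$ (strictly positive real for $1\le\rho<2$) and replaces the feedback by the cascade of two blocks, called System~A and System~B. System~A now has input $s_n=\varepsilon_n+\tfrac{\alpha_{n+1}}{2}w_n$ instead of $\varepsilon_n$; the extra $\tfrac{\alpha_{n+1}}{2}w_n^2$ is exactly what allows the storage computation you sketched to close, and the identity~(\ref{eq:Pupov inequality}) then exhibits $\eta_n=\tfrac{2-\alpha_{n+1}}{\beta_n}-\tfrac{1}{\beta_{n-1}}$ as the coefficient of the residual quadratic form --- so condition~(\ref{eq:eta condition}) is what makes System~A passive. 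System~B is a simple time-varying gain, passive because $\tfrac12(\rho-\alpha_{n+1})\ge 0$. In short, your storage-function calculation is essentially the right engine, but it must be run on the \emph{transformed} input $s_n$, not on $\varepsilon_n$ itself; without the loop transformation the argument does not go through.
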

\begin{figure}[t]
	\centering
	\includegraphics[width = 3.7 in]{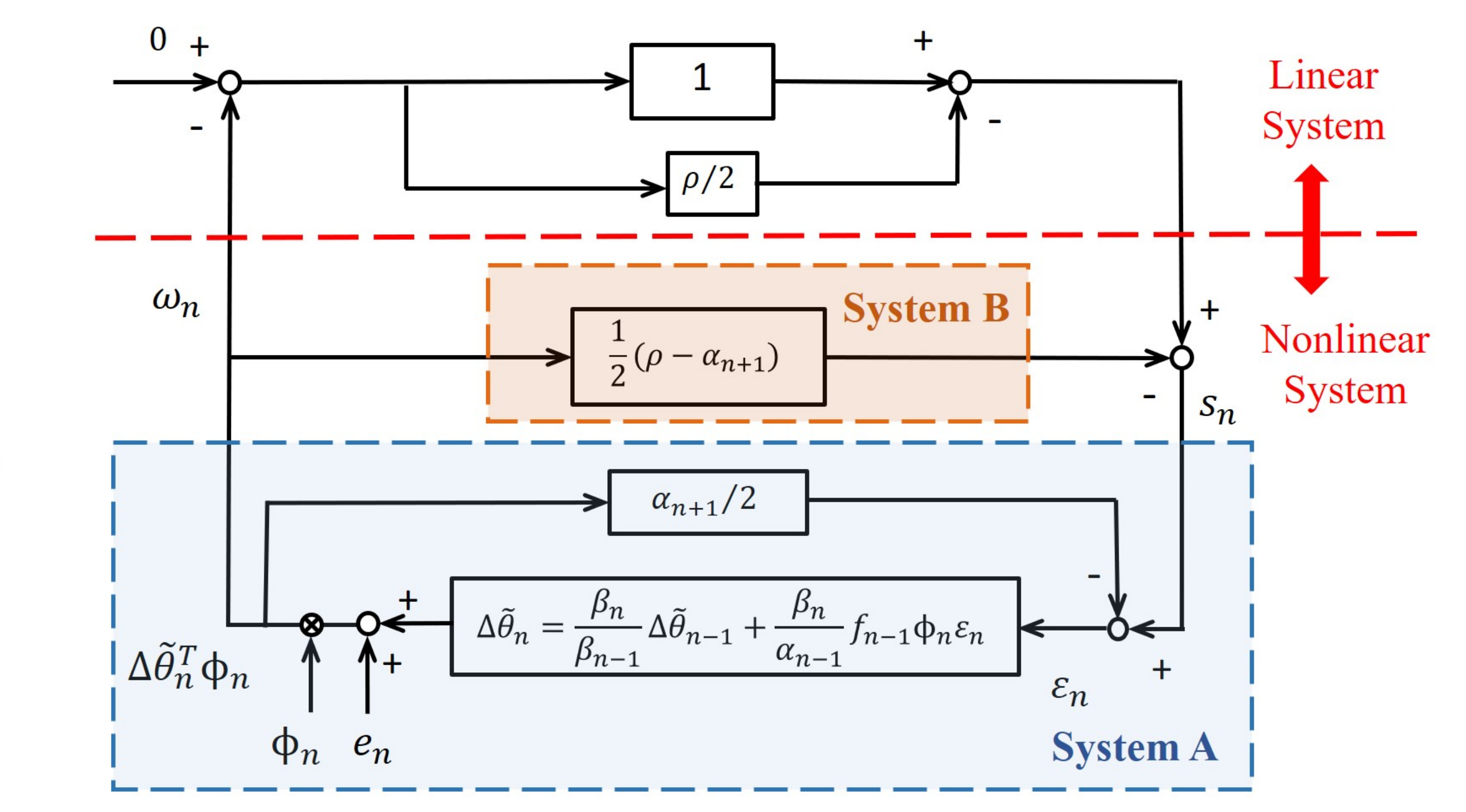}
	\caption{Equivalent system of the block diagram in Fig. \ref{Fig:PAAblock} for stability analysis.} \label{Fig:PAAblock1}
\end{figure}
\begin{proof}
	To prove the theorem we use the concept of hyperstability \cite{anderson1968simplified} briefly summarized in Appendix A.
	Next, we prove that the sufficient conditions listed in Theorem A.1 are satisfied.
	First, we notice that the forward linear system is the identity, which is obviously strictly positive real. However, the nonlinear block of the adaptation algorithm does not satisfy the Popov inequality. Therefore, we further modify the system and rewrite it as the one shown in the block diagram of Fig. \ref{Fig:PAAblock1}. Since the same signals have been added and subtracted in the feedback system, the stability property will not change.
	
	We now again check the sufficient conditions for the modified system. Start with the requirement of the nonlinear feedback block satisfying Popov inequality.
	Considering System A in the block diagram of Fig. \ref{Fig:PAAblock1} with input ($s_{n}$) and output ($w_{n}$) signals, we have
	\begin{align}
	w_{n} &= \Delta \tilde{\theta}^T_{n}\bm{\phi}_{n},~~
	s_{n} = \varepsilon_{n}+\frac{\alpha_{n+1}}{2}\Delta \tilde{\theta}^T_{n}\bm{\phi}_{n}\nonumber\\
	\bm{\phi}_{n}\bm{\phi}_{n}^T &= \beta_{n}^{-1}(\bm{f}_{n}^{-1}-\alpha_{n}\bm{f}_{n-1}^{-1}) \nonumber\\
	\bm{\phi}_{n}\varepsilon_{n} &=\frac{\alpha_{n}}{\beta_{n}}\bm{f}_{n-1}^{-1}\Delta\tilde{\theta}_{n} -\frac{\alpha_{n}}{\beta_{n-1}}\bm{f}_{n-1}^{-1}\Delta\tilde{\theta}_{n-1} \label{phiError}
	\end{align}
	from equations (\ref{eq:sigle adaptation gain}) and (\ref{eq:35}) without considering the external disturbance$e_n$. Define
	\begin{align}
	\eta_n = \frac{2-\alpha_{n+1}}{\beta_{n}}-\frac{1}{\beta_{n-1}} \nonumber   
	\end{align}
	for the sake of simplicity in later expression. The sum of the product of $w_n$ and $s_n$ can be calculated as:
	\begin{align}
	~~~&\sum_{n=1}^{2k} w_{n}s_{n} = \sum_{n=1}^{2k} \Delta \tilde{\theta}^T_{n}\bm{\phi}_{n}(\varepsilon_{n}+\frac{\alpha_{n+1}}{2}\Delta \tilde{\theta}^T_{n}\bm{\phi}_{n})\nonumber\\
	=&\sum_{n=1}^{2k}\frac{\alpha_{n}}{2}\frac{1}{\beta_{n-1}}(\Delta\tilde{\theta}_{n}^T-\Delta\tilde{\theta}_{n-1}^T)\bm{f}_{n-1}^{-1}(\Delta\tilde{\theta}_{n}-\Delta\tilde{\theta}_{n-1})\nonumber\\
	&~~+\sum_{n=1}^{2k}\frac{\alpha_{n+1}}{2\beta_{n}}\Delta \tilde{\theta}^T_{n}\bm{f}_{n}^{-1}\Delta \tilde{\theta}_{n}-\frac{\alpha_{n}}{2\beta_{n-1}}\Delta \tilde{\theta}^T_{n-1}\bm{f}_{n-1}^{-1}\Delta \tilde{\theta}_{n-1}\nonumber\\
	&~~~~~~~~+\sum_{n=0}^{2k}\frac{\eta_n\alpha_{n}}{2}\Delta\tilde{\theta}_{n}^T\bm{f}_{n-1}^{-1}\Delta\tilde{\theta}_{n}\nonumber\\
	=&\sum_{n=1}^{2k}\frac{\alpha_{n}}{2\beta_{n-1}}(\Delta\tilde{\theta}_{n}^T-\Delta\tilde{\theta}_{n-1}^T)\bm{f}_{n-1}^{-1}(\Delta\tilde{\theta}_{n}-\Delta\tilde{\theta}_{n-1})\nonumber\\
	&~~~~~~~~+\frac{\alpha_{2k+1}}{2\beta_{2k}}\Delta \tilde{\theta}^T_{2k}\bm{f}_{2k}^{-1}\Delta \tilde{\theta}_{2k}-\frac{\alpha_{1}}{2\beta_{0}}\Delta \tilde{\theta}^T_{0}\bm{f}_{0}^{-1}\Delta \tilde{\theta}_{0}\nonumber\\
	&~~~~~~~~~~~~+\sum_{n=1}^{2k}\frac{\eta_n\alpha_{n}}{2}\Delta\tilde{\theta}_{n}^T\bm{f}_{n-1}^{-1}\Delta\tilde{\theta}_{n}.\label{eq:Pupov inequality}
	\end{align}
	Since all the variables of $\alpha_n$, $\beta_n$ are positive as defined in (\ref{eq:alphaNbetaDef}), we can know that the sum of the product of $w_n$ and $s_n$ will have a lower bound 
	\begin{table}[t]
	\caption{Model Parameters}
	\centering
	{\begin{tabular}{|c|c|c|c|}
			\hline
			vehicle mass & $m$ & 2300.132 & kg$~~$\\
			\hline
			vehicle rotational inertia & $I_z$ & 4400 & kgm$^2$\\
			\hline
			distance from COG to front axle & $L_f$ & 1.505 & m$~~$\\
			\hline
			distance from COG to rear axle & $L_r$ & 1.504 & m$~~$\\
			\hline
			front tire conering stiffness & $C_f$ & 160776 & N/rad$^2$\\
			\hline
			rear tire conering stiffness & $C_r$ & 254100 & N/rad$^2$\\
			\hline
			gravity & $g$ & 9.80665 & m/s$^2$\\
			\hline
	\end{tabular}}
	\label{table1}
\end{table}
	\begin{align}
	    \sum_{n=1}^{2k} w_{n}s_{n}\geq& -\frac{\alpha_{1}}{2\beta_0}\Delta \tilde{\theta}^T_{0}\bm{f}_{0}^{-1}\Delta \tilde{\theta}_{0} = 0 \nonumber 
	\end{align}
	and satisfy the Pupov inequality with a condition of
	\begin{align}
	\eta_n \geq 0 \quad \forall n = 1,2,...,2k.
	\end{align}
	Next, considering the time varying linear System B in the block diagram of Fig. \ref{Fig:PAAblock1}, we find that it also satisfies the Popov inequality since
	\begin{align}
	\frac{1}{2}(\rho - \alpha_{n+1}) \geq 0 \quad \text{for choosing }~ 1 \leq \rho<2. 
	\end{align}
	Then, the overall nonlinear feedback system as shown in the block diagram of Fig. \ref{Fig:PAAblock1} satisfies the Popov inequality since it is made by a feedback connection of two passive systems, A and B. Finally, the linear feedforward system for the modified system, $1-\rho/2$, is strictly positive real for having $1\leq \rho < 2$. Now, we know that the adaptation system is hyperstable. In other words, $|(1-\rho/2)w_{n}| < \infty$ is bounded. This will further imply that the output of the nonlinear feedback system, $w_n < \infty$, is bounded as well. Therefore, having all of three requirements listed in Appendix A, we can conclude that the adaptation system without the external disturbance is asymptotic hyperstable $\varepsilon_n \rightarrow 0$. 
\end{proof}
We start from the analysis in Theorem V.1 and consider the effect of the external disturbance $e_n$. 
\begin{theorem}
	Assume that the two norm of the difference of the true parameter $\tilde{\theta}_k$ between two consecutive steps is bounded:
	\begin{align}
	\|\tilde{\theta}_{k-1}-\tilde{\theta}_{k}\|_2 \le \Lambda ~~~~\quad \forall k = 1,2,...,\infty. \nonumber
	\end{align}
	Consider the regularized weighted least square problem (\ref{eq:cost}) with the regression model described in (\ref{eq:regression model}). 
	Then, there exists a set of parameters $\delta>0, 0 \ll \lambda < 1$  satisfying the condition:
	\begin{align} \sigma_{1,k}^2\sigma_{2,k}^2 +\delta(2-\lambda)\sigma_{2,k}^2-\delta\sigma_{1,k}^2 \geq 0 ~~ \forall k = 1,2,...,\infty \label{eq:PAAcondition}
	\end{align}
	which guarantees $\varepsilon_{2k}\in R(\Lambda)$ for $R(\Lambda)$ being a ball of radius $\Lambda$ centred in the origin.
\end{theorem}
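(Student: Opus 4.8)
The plan is to reduce Theorem V.2 to the disturbance-free result of Theorem V.1 and then to carry the bounded disturbance $e_n$ through the same passivity estimate. First I would show that the scalar condition (\ref{eq:PAAcondition}) is nothing but the hyperstability condition (\ref{eq:eta condition}) rewritten in terms of the singular values $\sigma_{j,k}$. Substituting $\mu_{j,k} = (\sigma_{j,k}^2+\delta(1-\lambda))/\sigma_{j,k}^2$ together with the definitions of $\alpha_n,\beta_n$ in (\ref{eq:alphaNbetaDef}) into $\eta_n\geq 0$, the even-index instances collapse --- after clearing the positive denominators $\sigma_{1,k}^2\sigma_{2,k}^2$ and dividing by $1-\lambda>0$ --- to exactly (\ref{eq:PAAcondition}), while the odd-index instances reduce to a mild, parameter-independent monotonicity relation $\sigma_{1,k}\geq\sigma_{2,k-1}$ between consecutive singular values that holds under the full-rank, bounded-excitation hypothesis on $\Phi_k$. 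This identifies (\ref{eq:PAAcondition}) as a sufficient condition for the nominal (disturbance-free) loop to be asymptotically hyperstable in the sense of Theorem V.1.

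Second, I would establish feasibility, i.e. that some admissible pair $(\delta,\lambda)$ actually satisfies (\ref{eq:PAAcondition}) for every $k$. Rearranging the inequality as $\delta\,[(2-\lambda)\sigma_{2,k}^2-\sigma_{1,k}^2] \geq -\sigma_{1,k}^2\sigma_{2,k}^2$, the right-hand side is non-positive, so the condition holds automatically whenever $(2-\lambda)\sigma_{2,k}^2\geq\sigma_{1,k}^2$; otherwise it is equivalent to the upper bound $\delta \leq \sigma_{1,k}^2\sigma_{2,k}^2/(\sigma_{1,k}^2-(2-\lambda)\sigma_{2,k}^2)$. Because the regression matrix $\Phi_k$ in (\ref{eq:regression model}) is built from physically bounded, full-rank data, the singular values lie in a compact positive interval, so the infimum over $k$ of these upper bounds is strictly positive; choosing $\delta$ below it (with $\lambda$ near $1$) yields an admissible pair, which proves the claimed existence.

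Third --- and this is where the disturbance enters --- I would repeat the Popov-inequality computation (\ref{eq:Pupov inequality}) while retaining the term $e_n$ in the error dynamics (\ref{eq:35}). Under the standing assumption $\|\tilde{\theta}_{k-1}-\tilde{\theta}_k\|_2\leq\Lambda$, the sequence $e_n=\frac{\beta_n}{\beta_{n-1}}\tilde{\theta}_{n-1}-\tilde{\theta}_n$ is uniformly bounded by a constant multiple of $\Lambda$. The hyperstability established in the first step means the map from this external input into $\{\varepsilon_n\}$ has finite gain; concretely, the lower bound on $\sum w_n s_n$ now acquires an additional cross term in $e_n$, and completing the square (equivalently, invoking the input-to-state reading of hyperstability) shows that the ultimate value of $\varepsilon_{2k}$ cannot exceed the size of the driving disturbance, giving $\varepsilon_{2k}\in R(\Lambda)$.

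I expect the third step to be the principal obstacle: unlike the clean asymptotic statement $\varepsilon_n\to 0$ of Theorem V.1, here one must track the disturbance through the feedback loop quantitatively and show that the residual ball has radius exactly $\Lambda$ rather than some inflated multiple. The delicate points are \emph{(i)} bounding $\|e_n\|$ by $\Lambda$ uniformly despite the ratio $\beta_n/\beta_{n-1}\neq 1$, which requires controlling the step-to-step variation of the $\mu_{j,k}$, and \emph{(ii)} ensuring that the strict passivity surplus produced by the inequality in (\ref{eq:PAAcondition}) dominates the disturbance injection so that the bound does not degrade across the odd/even half-steps of the $n=1,\dots,2k$ reindexing.
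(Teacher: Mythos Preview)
Your outline is essentially the paper's own strategy: derive (\ref{eq:PAAcondition}) by expanding $\eta_n\geq 0$ via (\ref{eq:alphaNbetaDef}), then redo the Popov estimate of Theorem~V.1 with the disturbance retained. Your odd/even case split and the feasibility argument in step~2 are in fact more explicit than what the paper provides. The one place the paper proceeds differently is your step~3. Rather than bounding $e_n$ and invoking a finite-gain/ISS reading, the paper recomputes $\bm{\phi}_n\varepsilon_n$ from the full dynamics (\ref{eq:35}), which adds to the Popov sum (\ref{eq:Pupov inequality}) the single cross term $\sum_n\alpha_n\Delta\tilde{\theta}_n^T\bm{f}_{n-1}^{-1}\bigl(\tfrac{1}{\beta_n}\tilde{\theta}_n-\tfrac{1}{\beta_{n-1}}\tilde{\theta}_{n-1}\bigr)$; it then pairs this termwise with the $\tfrac{\eta_n\alpha_n}{2}$ surplus and asks when their sum remains nonnegative, obtaining the explicit dissipation region $\|\Delta\tilde{\theta}_n\|_2\geq \tfrac{2\kappa(\bm{f}_{n-1}^{-1})}{\eta_n}\bigl\|\tfrac{1}{\beta_n}\tilde{\theta}_n-\tfrac{1}{\beta_{n-1}}\tilde{\theta}_{n-1}\bigr\|_2$ outside of which the energy still decreases. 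This sidesteps your concern~(i) --- the paper never bounds $e_n$ itself, which would indeed drag in $\|\tilde{\theta}_{n-1}\|$ through the factor $\beta_n/\beta_{n-1}-1$ --- and works with $\tfrac{1}{\beta_n}\tilde{\theta}_n-\tfrac{1}{\beta_{n-1}}\tilde{\theta}_{n-1}$ directly. Your concern~(ii) about the radius being exactly $\Lambda$ is not sharply resolved by either route; the paper's final bound carries the factor $\kappa(\bm{f}_{n-1}^{-1})/\eta_n$ and is read qualitatively rather than pinned down to $\Lambda$.
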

\begin{proof}
We use the same steps as in Theorem V.1. Consider the input signal $\varepsilon_{n}$ of System A with 
\begin{align}
	\phi_{n}\varepsilon_{n} =\frac{\alpha_{n}}{\beta_{n}}\bm{f}_{n-1}^{-1}(\Delta\tilde{\theta}_{n}+&\tilde{\theta}_{n})-\frac{\alpha_{n}}{\beta_{n-1}}\bm{f}_{n-1}^{-1}(\Delta\tilde{\theta}_{n-1}+\tilde{\theta}_{n-1})\nonumber
\end{align}
deriving from (\ref{eq:35}). We can derive the same Popov inequality as shown in (\ref{eq:Pupov inequality}) for System A but with an extra term of
\begin{align}
	\sum_{n=1}^{2k}\alpha_{n}\Delta\tilde{\theta}_{n}^T\bm{f}_{n-1}^{-1}(\frac{1}{\beta_{n}}\tilde{\theta}_{n}-\frac{1}{\beta_{n-1}}\tilde{\theta}_{n-1}).\label{eq:extra term}
\end{align}
Therefore, the same requirement of $\eta_n = \frac{2-\alpha_{n+1}}{\beta_{n}}-\frac{1}{\beta_{n-1}} \geq 0$ in Theorem V.1 is necessary for hyperstability. From this, we can easily infer the condition of (\ref{eq:PAAcondition}) by expanding out $\beta_{n-1}$, $\beta_{n}$ and $\alpha_{n+1}$ using  (\ref{eq:alphaNbetaDef}).
\begin{table}[t]
	\caption{Estimator Parameters (Algorithm 1)}
	\centering
	{\begin{tabular}{|c|c|c|}
			\hline
			sampling time & $\Delta t$ & 0.01 sec$~~$\\
			\hline
			covariance matrix of $w_k[k]$ & $W_k$ & \text{diag}$([0.2,0.6,0.05])$\\
			\hline
			covariance matrix of $v_k[k]$ & $V_k$ & 0.05\\
			\hline
			covariance matrix of $w_d[k]$ & $W_d$ & \text{diag}$([6,0.5,0.1,0.0002])$\\
			\hline
			covariance matrix of $v_d[k]$ & $V_d$ & \text{diag}$([0.1,0.01])$\\
			\hline
			forgetting factor & $\lambda$ & 0.975\\
			\hline
			regularized term weighting& $\delta$ & 0.02\\
			\hline
			yaw rate threshold & $r_{t}$ & 0.1~$\text{rad}/\text{s}$\\
			\hline
	\end{tabular}}
	\label{table2}
\end{table}
Next, combining the first term in the right hand side of (\ref{eq:Pupov inequality}) together with (\ref{eq:extra term}), we can conclude that System A will satisfy Pupov inequality under the conditions:
\begin{align}
	\frac{\eta_n}{2}\Delta\tilde{\theta}_{n}^T\bm{f}_{n-1}^{-1} \Delta\tilde{\theta}_{n}+\Delta\tilde{\theta}_{n}^T\bm{f}_{n-1}^{-1}&(\frac{1}{\beta_{n}}\tilde{\theta}_{n}-\frac{1}{\beta_{n-1}}\tilde{\theta}_{n-1}) \geq 0 \nonumber\\
	&\quad\quad\forall n = 1,2,...,2k\nonumber\\
	\implies
	\|\Delta\tilde{\theta}_{n}\|_2 \geq \frac{2\kappa(\bm{f}_{n-1}^{-1})}{\eta_n}&\left\|\frac{1}{\beta_{n}}\tilde{\theta}_{n}-\frac{1}{\beta_{n-1}}\tilde{\theta}_{n-1}\right\|_2 \nonumber\\
	&\quad\quad\forall n = 1,2,...,2k \label{eq:thetaUpperbound}
\end{align}
where $\kappa(\cdot)$ denotes the condition number of the positive definite matrix $\bm{f}_{n-1}^{-1}$. Since the rate of $\tilde{\theta}$ is bounded by the assumption, the existence of the right hand side in (\ref{eq:thetaUpperbound}) is guaranteed. Then, based on passivity theorem \cite{khalil1996noninear}, we know that there exists a time-varying energy function which is positive definite and is dissipating over time in the region of
\begin{align}
	\|\Delta\tilde{\theta}\|_2 \geq \max_n  \frac{2\kappa(\bm{f}_{n-1}^{-1})}{\eta_n}\left\|\frac{1}{\beta_{n}}\tilde{\theta}_{n}-\frac{1}{\beta_{n-1}}\tilde{\theta}_{n-1}\right\|_2 .\label{eq:conditionDeltaTheta}
\end{align}
Equation (\ref{eq:conditionDeltaTheta})
implies the boundedness of the predicted measurement error $\epsilon_n$ in the adaptation algorithm. 
\end{proof}
\noindent\textit{\textbf{Remark:}} Some considerations can be drawn from the analysis in Theorem V.1 and Theorem V.2.
First, the boundedness of the adaptation error depends on the time-varying rate of change of $\tilde{\theta}$. According to the result shown in (\ref{eq:conditionDeltaTheta}), a larger value in $\left\|\frac{1}{\beta_{n}}\tilde{\theta}_{n}-\frac{1}{\beta_{n-1}}\tilde{\theta}_{n-1}\right\|_2$ will lead to a larger bound in the predicted measurement error. Therefore, we can expect a better adaptation performance under non-extreme driving scenarios. Second, the boundedness of the adaptation error shrinks as the condition number of $\bm{f}_{n-1}^{-1}$ decreases. This highlights the importance of input measurement matrices, $\Phi^T_i$, being well-conditioned in the regression model. By observing (\ref{eq:regression model}), we can infer that the conditional number of $\Phi^T$ is roughly equal to the ratio of it's (2, 1) and (2, 2) components, since $L_f \approx L_r$ for a vehicle. In order to avoid a bad adaptation performance, we should add an additional condition of
\begin{figure}[t]
	\begin{center}
		\includegraphics[width = 3.5in]{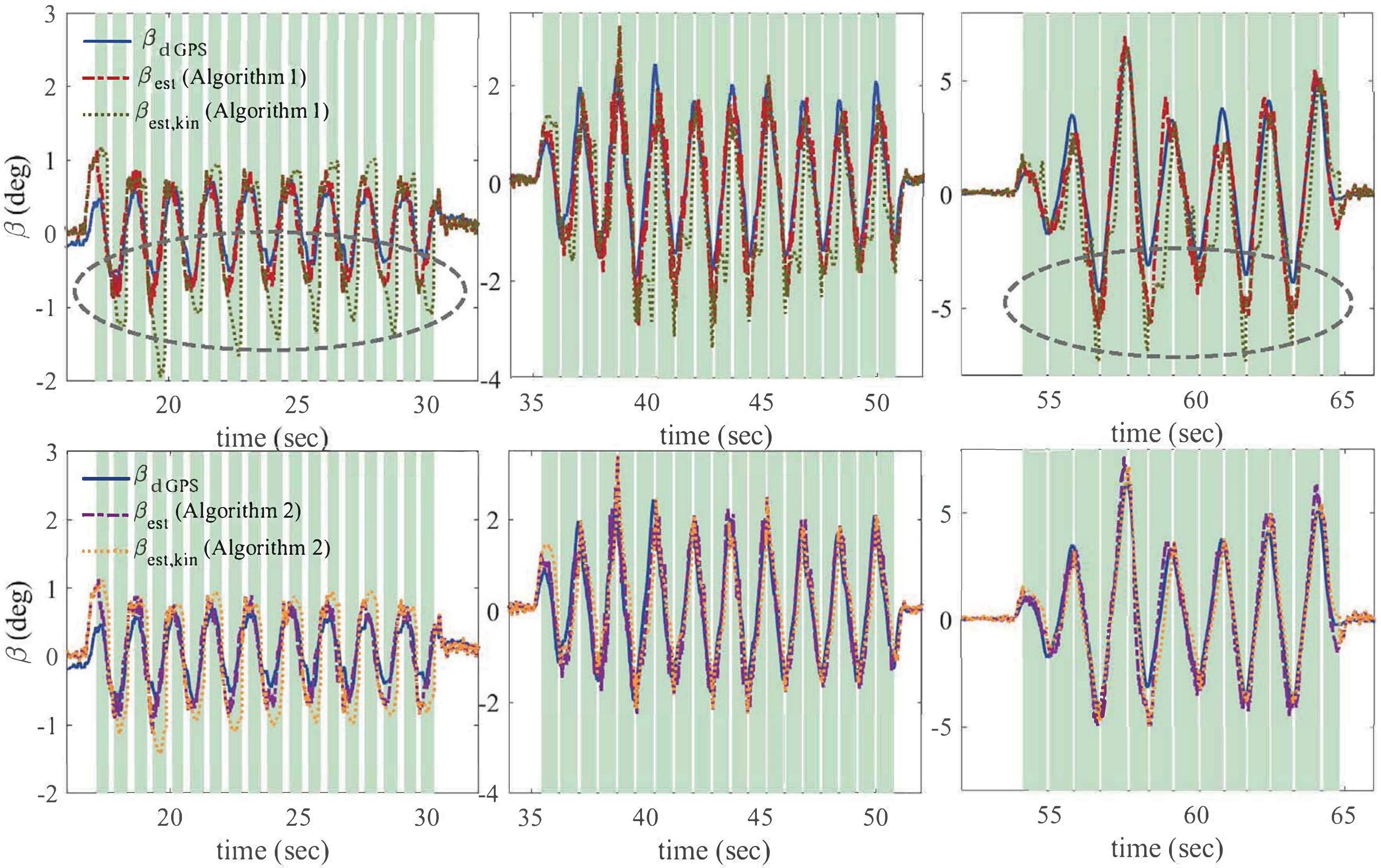}
		\caption{Performance comparison of Algorithm 1 and Algorithm 2 for a slalom test.} 
		\label{fig. a1}
	\end{center}
\end{figure}
\begin{align}
\frac{1}{c_t} \le \left|\frac{\Phi^T(2,1)}{\Phi^T(2,2)}\right| \le c_t \label{eq:conditional number}
\end{align}       
to enable the adaptation process in our proposed sideslip angle algorithm where $c_t > 0$ is the maximum allowed conditional number of measurement data. Third, according to (\ref{eq:Pupov inequality}), $\eta_n$ is the energy dissipation rate of the system. Therefore, a larger value of $\eta_n$ results in a faster convergence rate. Based on an further analysis in the condition of $\eta_n \geq 0$, we can obtain a good starting value of the regularization weight
\begin{align}
\delta \approx 1 \left/\right. (\frac{1}{\sigma_{2,k}^2}-\frac{1}{\sigma_{1,k}^2})
\end{align}
 This is derived by rewriting (\ref{eq:PAAcondition}) into the following form:
\begin{align}
2-\lambda \geq \sigma_{1,k}^2\left(\frac{1}{\sigma_{2,k}^2}-\frac{1}{\delta}\right) \quad \text{for ~} 0 \ll \lambda < 1.
\end{align} 
\subsection{Convergence of the estimated cornering stiffness}
\begin{theorem}
The asymptotical hyperstability of the nonlinear feedback system depicted in Fig. \ref{Fig:PAAblock} with $e_n=0$ guarantees that the estimated parameters converge to $\frac{1}{\beta_n}\tilde{\theta}_n$
\begin{align}
	\lim_{n \rightarrow \infty}\tilde{\theta}_n^* \rightarrow \frac{1}{\beta_n}\tilde{\theta}_n \nonumber
\end{align}
\end{theorem}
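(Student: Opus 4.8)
The plan is to reduce the claim to the single statement $\Delta\tilde{\theta}_n \to 0$ and then obtain it from the asymptotic hyperstability of Theorem V.1. By the definition of the scaled estimation error in (\ref{eq:34}), $\Delta\tilde{\theta}_n = \beta_n\tilde{\theta}^*_n - \tilde{\theta}_n$, so $\tilde{\theta}^*_n \to \tfrac{1}{\beta_n}\tilde{\theta}_n$ is equivalent to $\Delta\tilde{\theta}_n \to 0$; the equivalence is legitimate because each $\beta_n \in \{\mu_{1,\lceil n/2\rceil}, \mu_{2,\lceil n/2\rceil}\}$ satisfies $\beta_n \geq 1$ and stays bounded above by (\ref{eq:23}). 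I would also note that with $e_n = 0$ the definition $e_n = \tfrac{\beta_n}{\beta_{n-1}}\tilde{\theta}_{n-1} - \tilde{\theta}_n$ forces $\tfrac{1}{\beta_n}\tilde{\theta}_n = \tfrac{1}{\beta_{n-1}}\tilde{\theta}_{n-1}$, so the target vector is constant in $n$ and the limit statement is well posed.

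First I would invoke Theorem V.1 with $e_n = 0$ to get $\varepsilon_n \to 0$. The first line of the error dynamics (\ref{eq:35}) reads $\varepsilon_n = -\bm{\phi}_n^T\Delta\tilde{\theta}_n$, so this gives at once that the projection of the scaled error onto the current regressor vanishes, $\bm{\phi}_n^T\Delta\tilde{\theta}_n \to 0$. By itself this controls only one direction of $\mathbb{R}^2$ at each step, so the real work is to combine the two substeps generated by the SVD splitting.

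The key structural fact is the one introduced around (\ref{eq:23}): for each original time step $k$ the two substeps $n = 2k-1$ and $n = 2k$ use $\bm{\phi}_{2k-1} = \bm{\phi}_{1,k} = \sigma_{1,k}\bm{u}_{1,k}$ and $\bm{\phi}_{2k} = \bm{\phi}_{2,k} = \sigma_{2,k}\bm{u}_{2,k}$, whose directions $\bm{u}_{1,k}, \bm{u}_{2,k}$ are orthonormal and span $\mathbb{R}^2$. Assuming the singular values stay bounded away from zero---guaranteed in the algorithm by the threshold $|r_i|\geq r_t$ and the conditioning bound (\ref{eq:conditional number}) that keeps $\Phi^T_k$ well-conditioned---the relations $\varepsilon_{2k-1}\to0$ and $\varepsilon_{2k}\to0$ give $\bm{u}_{1,k}^T\Delta\tilde{\theta}_{2k-1}\to0$ and $\bm{u}_{2,k}^T\Delta\tilde{\theta}_{2k}\to0$. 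To turn these two projections, taken at different substeps, into convergence of one vector, I would show the scaled error barely moves across a pair: from (\ref{eq:33}) the increment $\tilde{\theta}^*_n - \tilde{\theta}^*_{n-1} = \tfrac{1}{\alpha_n}\bm{f}_{n-1}\bm{\phi}_n\varepsilon_n \to 0$, and with $e_n=0$ the second line of (\ref{eq:35}) gives $\Delta\tilde{\theta}_{2k} - \tfrac{\beta_{2k}}{\beta_{2k-1}}\Delta\tilde{\theta}_{2k-1} \to 0$. Projecting this relation onto $\bm{u}_{1,k}$ transfers the already-vanishing $\bm{u}_{1,k}$-component from step $2k-1$ to step $2k$, so both orthogonal components of $\Delta\tilde{\theta}_{2k}$ vanish and hence $\Delta\tilde{\theta}_{2k}\to0$; the same slow-variation relation then yields $\Delta\tilde{\theta}_{2k-1}\to0$, and therefore $\Delta\tilde{\theta}_n\to0$ for all $n$.

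The step I expect to be the main obstacle is exactly this persistence-of-excitation passage: moving from a vanishing regressor projection to a vanishing error vector is valid only if $\sigma_{1,k}, \sigma_{2,k}$ do not collapse and the two regressor directions remain uniformly independent, together with boundedness of $\bm{f}_{n-1}$ and its condition number (cf. (\ref{eq:conditionDeltaTheta})). These are not automatic consequences of hyperstability and must be secured through the enabling conditions (\ref{eq:conditional number}) and the excitation provided when $|r_i|\geq r_t$; without them the projection argument degenerates and the individual components cannot be shown to vanish. Once that bookkeeping is in place, bounding the ratio $\beta_{2k}/\beta_{2k-1}$ through (\ref{eq:23}) and collecting the vanishing terms is routine.
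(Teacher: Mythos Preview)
Your proposal is correct and follows essentially the same route as the paper: invoke Theorem~V.1 to obtain $\varepsilon_n\to 0$, then use that two consecutive regressors $\bm{\phi}_n,\bm{\phi}_{n+1}$ span $\mathbb{R}^2$ (from the SVD splitting) to conclude the full error vector $\tilde{\theta}_n-\beta_n\tilde{\theta}^*_n$ vanishes. The only cosmetic difference is in how the second projection is obtained: the paper uses the a-priori error $\varepsilon^0_{n+1}=\bm{\phi}^T_{n+1}(\tilde{\theta}_{n+1}-\beta_{n+1}\tilde{\theta}^*_n)$ together with (\ref{eq:epsilonRelation}) and the $e_n=0$ relation $\tilde{\theta}_{n+1}=\tfrac{\beta_{n+1}}{\beta_n}\tilde{\theta}_n$ to get $\bm{\phi}^T_{n+1}(\tilde{\theta}_n-\beta_n\tilde{\theta}^*_n)\to 0$ directly, whereas you show $\Delta\tilde{\theta}_{2k}-\tfrac{\beta_{2k}}{\beta_{2k-1}}\Delta\tilde{\theta}_{2k-1}\to 0$ via the increment in (\ref{eq:33}) and then transfer the projection; both manipulations require the same boundedness of $\bm{f}_{n-1}\bm{\phi}_n$ and are equivalent in content.
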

\begin{proof}
	In Theorem V.1, the stability proof shows the convergence of  $\varepsilon_n$ without the disturbance term. 
	\begin{align}
	    \varepsilon_n &= \bm{\phi}^T_{n}(\tilde{\theta}_{n}-\beta_{n}\tilde{\theta}_{n}^*) \rightarrow 0. \label{converge1}
	\end{align}
	Then, recalling from (\ref{eq:epsilonRelation}), we know that $\varepsilon_n^0$ will also converge to zero for a bounded $\bm{\phi}_n$. We have 
	\begin{align}
	\varepsilon_{n+1}^0 &= \bm{\phi}^T_{n+1}(\tilde{\theta}_{n+1}-\beta_{n+1}\tilde{\theta}_{n}^*)\rightarrow 0 \nonumber 
	\end{align}
	which can be further rewritten as 
	\begin{align}
    \bm{\phi}^T_{n+1}(\tilde{\theta}_{n}-\beta_{n}\tilde{\theta}_{n}^*)\rightarrow 0 \label{eq:a}
	\end{align}
	by substituting $\tilde{\theta}_{n+1} =  \frac{\beta_{n+1}}{\beta_n}\tilde{\theta}_n$ under the assumption of no external disturbance.
	
	Combining the results of (\ref{converge1}) and (\ref{eq:a}) and using the fact  that $\tilde{\theta}_{n}-\beta_{n}\tilde{\theta}_{n}^*$ cannot be orthogonal to $\bm{\phi}_n$ and $\bm{\phi}_{n+1}$ since $\bm{\phi}_n$ and $\bm{\phi}_{n+1}$ span the whole state space, we can conclude that $\tilde{\theta}_{n}-\beta_{n}\tilde{\theta}_{n}^*$ will approach zero as $n \rightarrow \infty$.
\end{proof}
From Theorem V.2 and Theorem V.3 we can conclude that the estimated tire cornering stiffness coefficients will converge to a neighbourhood of the true values when we include the external disturbance $e_n$ term in the proof.
\begin{figure}[t]
	\begin{center}
		\includegraphics[width = 3.5in]{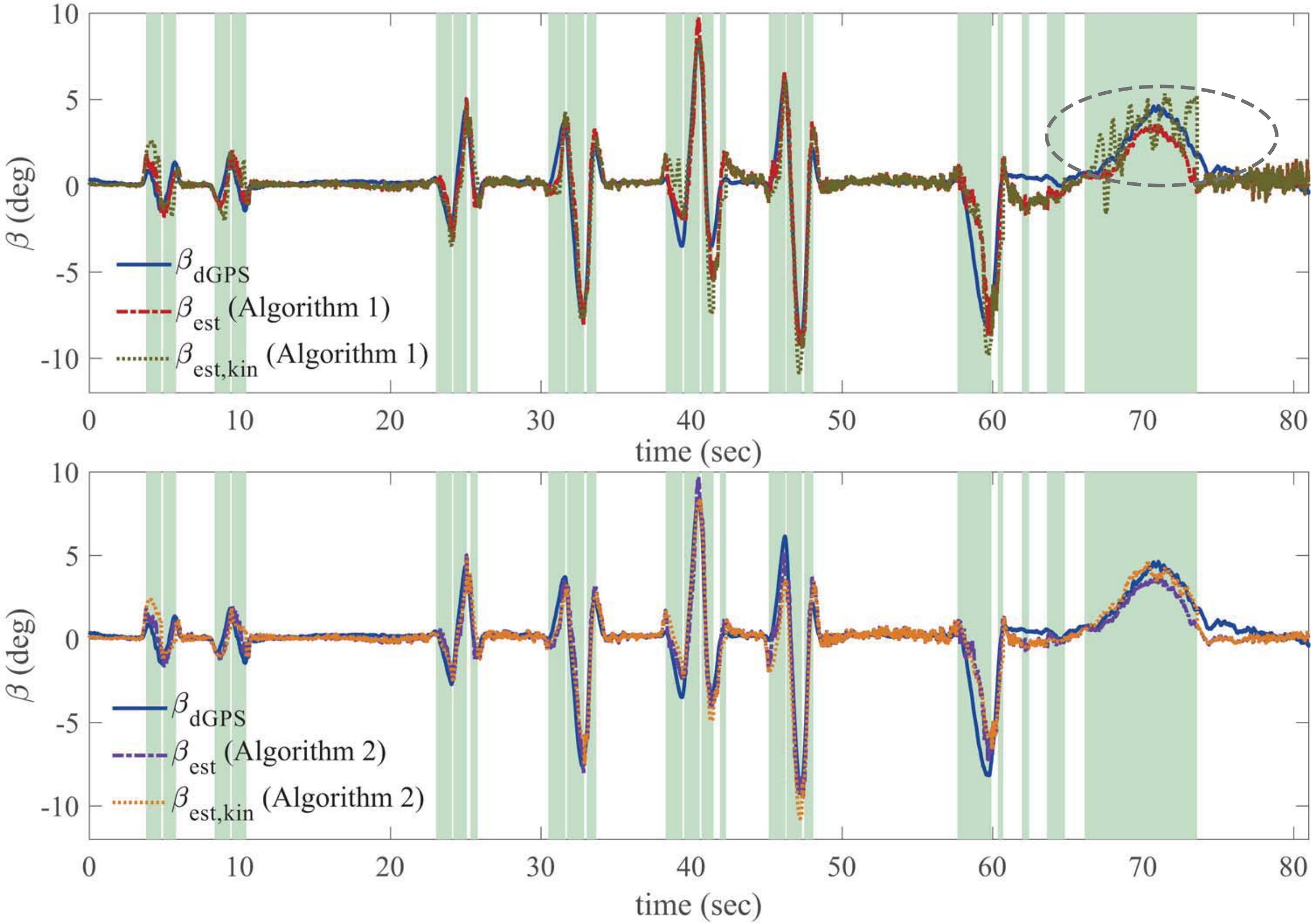}
		\caption{Performance comparison of Algorithm 1 and Algorithm 2 for a severe single lane changing.} 
		\label{fig. a2}
	\end{center}
\end{figure}
\section{Algorithm Improvement}
In this section, the proposed estimation algorithm is evaluated with real experimental tests. We first conducted a slalom and a severe single lane changing tests at Hyundai-Kia Motors California Proving Ground. The first test setting consists of eleven lined up cones, separated by 18 m. The vehicle is driven through the course in a slalom pattern at constant speed, 50 km/hr. The second one is a standardized maneuver which generates a peak lateral acceleration of approximately 0.6g. A further analysis and a small modification of Algorithm 1 are provided based on the estimation results.
\subsection{Experimental Setup}
Our experimental vehicle is a $5^{th}$ generation Hyundai Genesis equipped with a differential global positioning system (dGPS) Oxford TR3000. A real-time kinematics (RTK) technology is adopted to allow an accuracy down to 2-4 cm for position measurement. We will consider the measured sideslip angle provided from dGSP as a ground truth to validate the estimated performance. The real–time computations are performed on a dSPACE DS1401 Autobox system which consists of a IBM PowerPC 750GL processor running at 900 MHz. The aforementioned hardware components communicate through a CAN bus and the estimation algorithm is executed at 100 Hz.

Table \ref{table1} shows the nominal model parameters of the test vehicle and Table \ref{table2} shows the estimation parameters for Algorithm 1. We initialize the measurement noise covariances by processing the measurement outputs while they are held constant. Since the values of the noise covariances are all small, we then apply a reasonable scaling factor to avoid the numerical issue before the tuning. The process noise covariance matrix is picked based on the unmodeled dynamics. According to the results shown in Table \ref{table2}, we can see that the process noise covariance of the $v_y$ equation is chosen to be relative bigger than other states since the coupling of the roll dynamics has been ignored and the gravity effect causes more influence on $v_y$ dynamics. Similarly, we choose the process noise covariance of the sensor bias, $d$, to be significantly small because we believe that the offset is ``nearly" constant. In other words, we can treat the dynamics of $d$ as arbitrarily-slowly time-varying. For the forgetting factor, since it determines the rate of change of the weighting factors of the regression errors, we start with the value vary close to 1 for the fact that the tire cornering stiffness varies with the maneuver and our sampling rate is way fast enough to capture its varying speed. Then, we gradually decrease the value to allow more weighting on recent data to improve the performance.
\begin{figure}[t]
	\begin{center}
		\includegraphics[width = 3.2in]{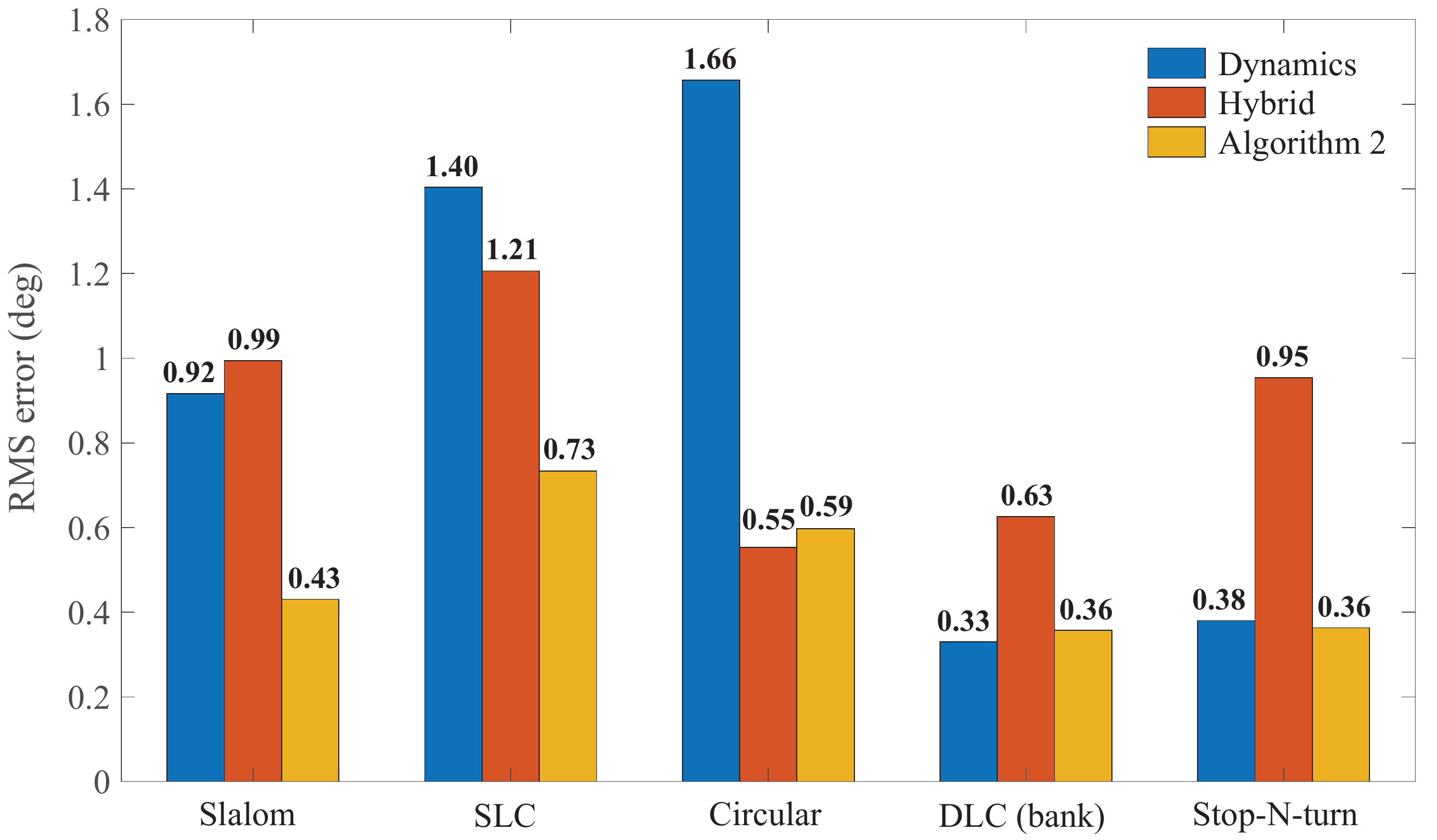}
		\caption{The root mean square errors of the proposed method compared with the existing methods.} 
		\label{Fig:RMS}
	\end{center}
\end{figure}
\subsection{Motivation}
The estimation results of a slalom and a severe single lane changing tests are shown in the upper plots of Fig. \ref{fig. a1} and Fig. \ref{fig. a2} respectively. Note that the light green background represents the condition of $|r|>r_t$ indicating that the adaptation algorithm is active. Comparing with the solid blue ($\beta_{dGPS}$) and the dashdotted red ($\beta_{est}$) lines, we can see that the proposed method performs well. However, there is still room for improvement in the region highlighted with gray dashed lines. In these regions the estimated sideslip angle ($\hat{v}_{y,k}$) provided from the kinematics model (\ref{eq:kin discrete-time model}) is noisy. 
This affects the output measurements $\Phi$ in the regression model which is used for the cornering stiffness adaptation. 
To address this issue, we proposed a small modification for Algorithm 1 which is described next.
\begin{algorithm*}
	\centering
	\caption{Sideslip Angle Estimation}
	\begin{algorithmic}[1]
		\Initialize{$\hat{\mathbf{x}}_{k}[0] \gets [v_x[0]~v_y[0]~0]^T$,~~$\hat{\mathbf{x}}_{d}[0] \gets [v_y[0]~r[0]~0~~0]^T$,~~$\tilde{\theta}^*_0 \gets 0$,~~ $R_0 \gets 0$,~~$\theta_0^* \gets {\theta}^++\tilde{\theta}^*_0$,}
		\State ~~~~$P_k[0] \gets P_{k,0}$,~~$P_d[0] \gets P_{d,0}$~~~~~~~~~~~~~~~~//\textit{ initialize prior means and estimate error covariance matrices for} \text{EKF}
		\For{i = 1 to k+1}
		\State $\hat{\mathbf{x}}_{d}[i] \gets$ EKFupdate($\hat{\mathbf{x}}_{d}[i-1],\mathbf{u}_{d}[i-1],\mathbf{u}_{d}[i],\mathbf{y}_{d}[i],P_d[i-1],\theta^*_{i-1}$)~~~~~~~~~~~// EKF \textit{update with model (\ref{eq:dyn discrete-time model})}
		\State $\hat{\mathbf{x}}_{k}[i] \gets$ EKFupdate($\hat{\mathbf{x}}_{k}[i-1],\mathbf{u}_{k}[i-1],\mathbf{y}_{k}[i],P_k[i-1]$))~~~~~~~~~~~~~~~~~~~~~~~// EKF \textit{update with model (\ref{eq:kine model modi})}
		\State $\mathbf{u}_{k}[i] \gets [a_x[i]~~a_y^{sen}[i]-g\sin\hat{\phi}_d[i]-\hat{d}_d[i]]^T$
		\If{$|r_i| \geq r_t$ \textbf{and} $1/c_t \leq |\Phi_{i}^T(2,1)/\Phi_{i}^T(2,2)| \leq c_t$}
		\State $R_{i} = \lambda R_{i-1}+\Phi_{i}\Phi_{i}^T$~~~~~~~~~~~~~~~~~~~~~~~~~~~//\textit{ obtain the input measurement $\Phi_{i}$ from (\ref{eq:regression model})}
		\State $\theta^*_i \gets {\theta}^++$ AdaptationUpdate($R_i, \tilde{\theta}^*_{i-1}$)~~~~~~//\textit{ apply the recursive update law (\ref{eq:recursive adapt law}) for the tire cornering stiffnesses}
		\Else 
		\State $\theta^*_i \gets \theta^*_{i-1}$ 
		\State $\hat{\mathbf{x}}_{k}[i] \gets [v_x[i]~\hat{v}_{y,d}[i]~\sin\hat{\phi}_d[i]]^T$ ~~~~~~~~~~~~//\textit{ update the state estimates of the} \text{EKF} \textit{for model (\ref{eq:kine model modi})}
		\State $P_k[i] \gets \text{diag}(0,P_d[i](1,1),P_d[i](3,3))$ ~~~~~//\textit{ update error covariance matrix of the} \text{EKF} \textit{for model (\ref{eq:kine model modi})}
		\EndIf        
		\State $\beta[i] \gets \tan^{-1}(\hat{v}_{y,d}[i]/v_x[i])$~~~~~~~~~~~~~~~~~~~~~~~~//\textit{ calculate sideslip angle}
		\EndFor
	\end{algorithmic}
\end{algorithm*}
\subsection{Modification to Algorithm 1}
In this section, we improve Algorithm 1 proposed in the previous section. We will show the performance of the new algorithm. However, the convergence analysis is harder to establish because of the tightly coupling between two observers. 

According to the discussion above, we want to improve the estimation of the kinematics model by considering the bank angle effect. Start by deriving from the lateral dynamics (\ref{eq:vy_dot}) and the lateral acceleration models (\ref{eq:aySen}). We can get the following relation:
\begin{align}
\dot{v}_y = -v_x r+a_y^{sen}-g\sin\phi-d. \label{eq:modify1}
\end{align}
Then, based on the result shown in (\ref{eq:modify1}), the original kinematics model (\ref{eq:kin discrete-time model}) in Algorithm 1 can be modified into:
\begin{align}
\begin{split}
A_k(t) &= \begin{bmatrix}
~0 &~~ r(t)\\
-r(t) &~~ 0
\end{bmatrix},~~~~
B_k(t) = \begin{bmatrix}
1~ &~ 0\\
0~ &~ 1
\end{bmatrix},\\
C_k(t) &= \begin{bmatrix}
1 ~&~ 0
\end{bmatrix}. \label{eq:kine model modi}
\end{split}
\end{align}  
with the estimated state and the input vectors defined as:
\begin{align}
    \hat{\mathbf{x}}_k =\begin{bmatrix} v_x\\v_y
    \end{bmatrix},~~~ \mathbf{u}_k = \begin{bmatrix}a_x\\a_y^{sen}-g\sin\hat{\phi}_{d}-\hat{d}_{d}\end{bmatrix}. \nonumber
\end{align}
We can see that the measured lateral acceleration in $\mathbf{u}_k$ is added with an additional term, $-g\sin\hat{\phi}_{d}-\hat{d}_{d}$, where $\sin{\hat{\phi}_{d}}$ and $\hat{d}_{d}$ are the estimated values from the dynamics model (\ref{eq:dyn discrete-time model}).
With this modification, the kinematics model (\ref{eq:kine model modi}) does not remain unaffected by the vehicle parameters anymore. However, we can claim that the estimated term of  $-g\sin\hat{\phi}_{d}-\hat{d}_{d}$ from the dynamics model is relatively less sensitive to the model error in the normal driving situations for $\dot{v}_{y}$ being small and slowly varying since
\begin{align}
-g\sin\hat{\phi}_{d}-\hat{d}_{d} = \hat{\dot{v}}_{y,d}+v_x\hat{r}-a_y^{sen}.\label{eq:modify2}
\end{align} 
By observing (\ref{eq:modify2}), we can expect that $-g\sin\hat{\phi}_{d}-\hat{d}_{d}$ will mostly depend on the error of $\hat{\dot{v}}_{y,d}$ because $\hat{r}$ and $a_y^{sen}$ are directly relevant to the values measured from the sensors. 
Although it will be bias more when $\dot{v}_{y}$ is large, we still can expect it with a similar trend and without too much difference from the true value.

The modified version of the estimator is provided in Algorithm 2. Moreover, we add an additional condition listed in (\ref{eq:conditional number}) to guarantee well-conditioned measurement data for tire cornering stiffness adaptation. 
The estimated performance of Algorithm 2 is shown in the bottom plots of Fig. \ref{fig. a1} and Fig. \ref{fig. a2}. We can see a significant improvement in the estimated value of $\hat{v}_{y,k}$.
\section{Experimental verification}
Having a modified version of the estimator (Algorithm 2), to evaluate its robustness, three more different tests of severe and normal steering maneuvers under different road conditions are conducted and all the tests are listed as follows:
\begin{enumerate}
    \item[1)] a slalom test on a low friction flat road,
    \item[2)] a severe single lane changing on a normal flat road,
    \item[3)] a steady circular motion test on a normal flat road,
    \item[4)] a double lane changing test on a road with significant bank angle, and
    \item[5)] a stop-N-turn test on a normal road.
\end{enumerate}
To display the advantage of Algorithm 2, we further compare the experimental results with other two methods: 
\begin{enumerate}
    \item[1)] Dynamics observer: a dynamics estimator with a state augmented with bank angle and sensor bias without cornering stiffness adaptation, and
    \item[2)] Hybrid observer: a hybrid estimator switching between the dynamics model and a kinematics model described in Algorithm 2.
\end{enumerate}
All parameters required in Algorithm 2 are the same as Algorithm 1 listed in Table \ref{table2} except that the covariance matrix of $w_k[k]$ is set to be diag$([0.2,0.6])$ and the maximum conditional number, $c_{t}$, is 20.  
\subsection{Experimental results for Algorithm 2}
The experimental results are shown in Fig. \ref{Fig:Slalom}-\ref{Fig:stopnturn2} and the comparison of RMS error performances can be found in Fig. \ref{Fig:RMS}. As we can see, both the hybrid and dynamics observers exhibit a large RMS value under some driving situations. Algorithm 2 provides superior performances in all scenario tests.  
\begin{figure}[t]
	\begin{center}
		\includegraphics[width = 3.4 in]{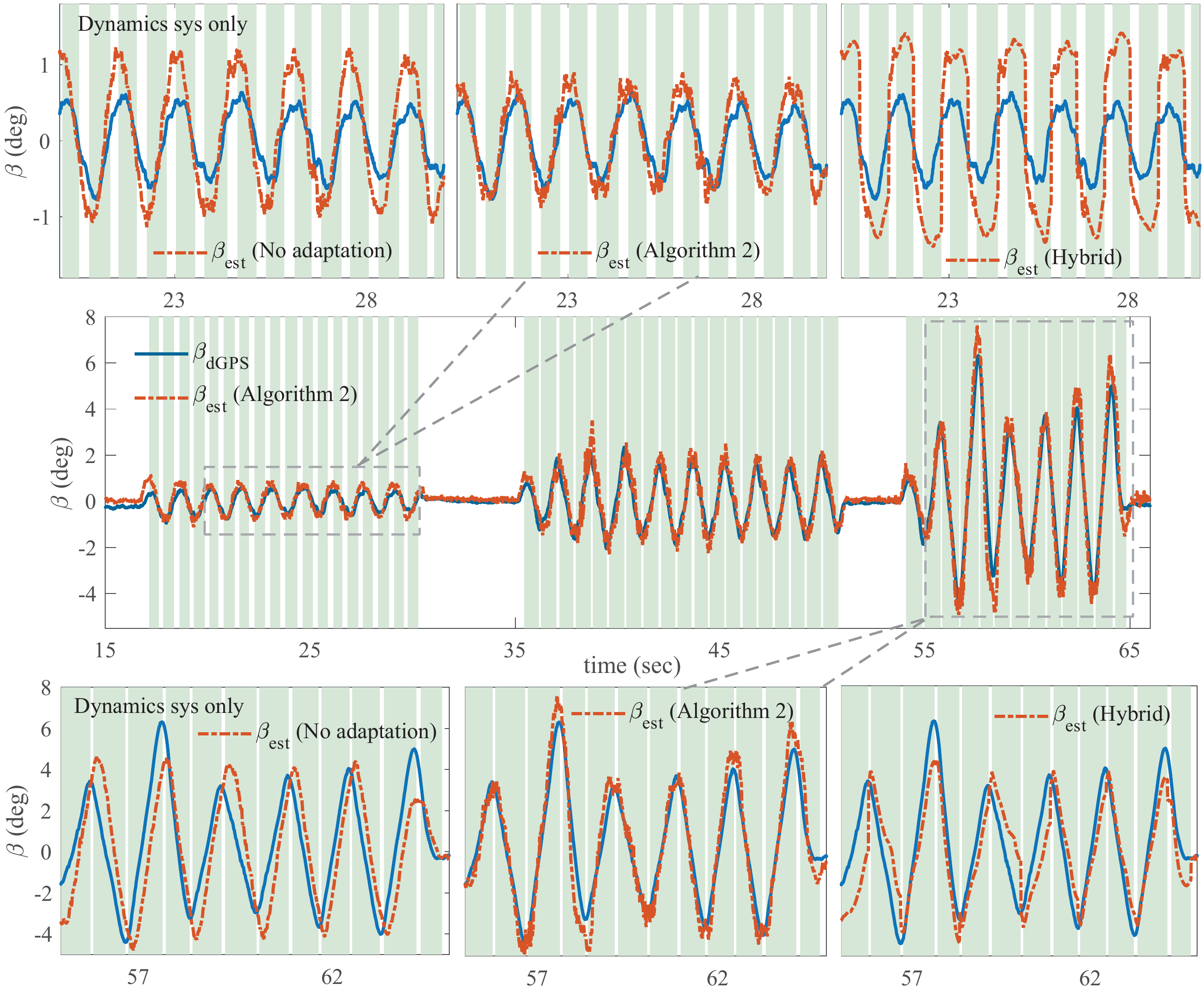}
		\caption{Comparison of the sideslip angle estimation for a slalom test.} 
		\label{Fig:Slalom}
	\end{center}
\end{figure}
\begin{figure}[t]
	\begin{center}
		\includegraphics[width = 3.5in]{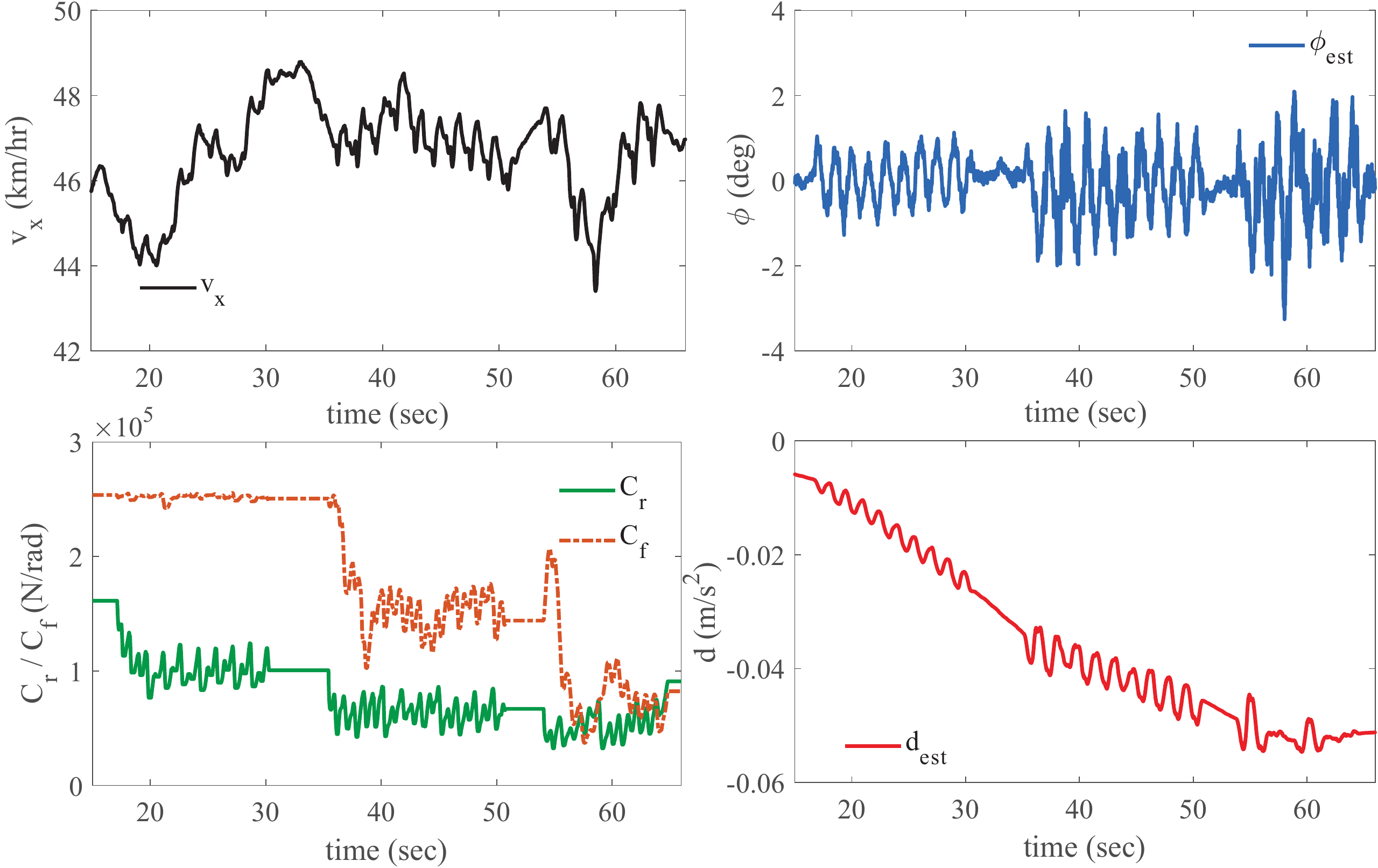}
		\caption{Slalom test results for Algorithm 2: longitudinal velocity; adapted cornering stiffnesses; estimated bank angle and sensor bias.}
		\label{Fig:Slalom2} 
	\end{center}
\end{figure}

Starting from slalom, severe single lane changing and steady circular motion tests, we can see that Fig. \ref{Fig:Slalom}, Fig. \ref{Fig:SLC} and Fig. \ref{Fig:cir} demonstrate the effectiveness of the proposed estimator. For the dynamics model-based approach, it is obvious that there is a big disparity between the true and the estimated sideslip angle when the vehicle enters the nonlinear tire region. As expected, for the method switching between dynamics and kinematics models, we can see a discontinuous estimating during the transition. The longitudinal velocity, adapted tire cornering stiffnesses, estimated bank angle and sensor bias for all scenario tests are shown in Fig. \ref{Fig:Slalom2}, Fig. \ref{Fig:SLC2} and Fig. \ref{Fig:cir2}. The adapted cornering stiffnesses becomes smaller for a low friction road condition or entering the nonlinear tire region. Since the estimated bank angle is affected by the vehicle roll angle, we can conclude that all the estimated bank angle resulting within -4$^\circ$- 4$^\circ$ may be questionable.
However, we still trust the estimation for large bank angles. Fig. \ref{Fig:bank1} and Fig. \ref{Fig:bank2} shows the experimental results of double lane change tests on a road with a large bank angle.
The estimate performance of the switching algorithm is poor since the kinematics model is sensitive to the lateral acceleration measurement disturbance introduced from the bank angle. Fig. \ref{Fig:bank2} confirms the ability of the proposed algorithm to estimating the bank angle, with the estimated value of the bank angle converging to the true value of 14$^\circ$. The adapted cornering stiffnesses remain unchanged because of the mild driving condition. 
Finally, we evaluate the performance of Algorithm 2 by conducting a stop-N-turn test for a varying low speed condition (Fig. \ref{Fig:stopnturn} and Fig. \ref{Fig:stopnturn2}). Again, the results are very promising.
\begin{figure}[t] 
\begin{center} 
		\includegraphics[width = 3.5 in]{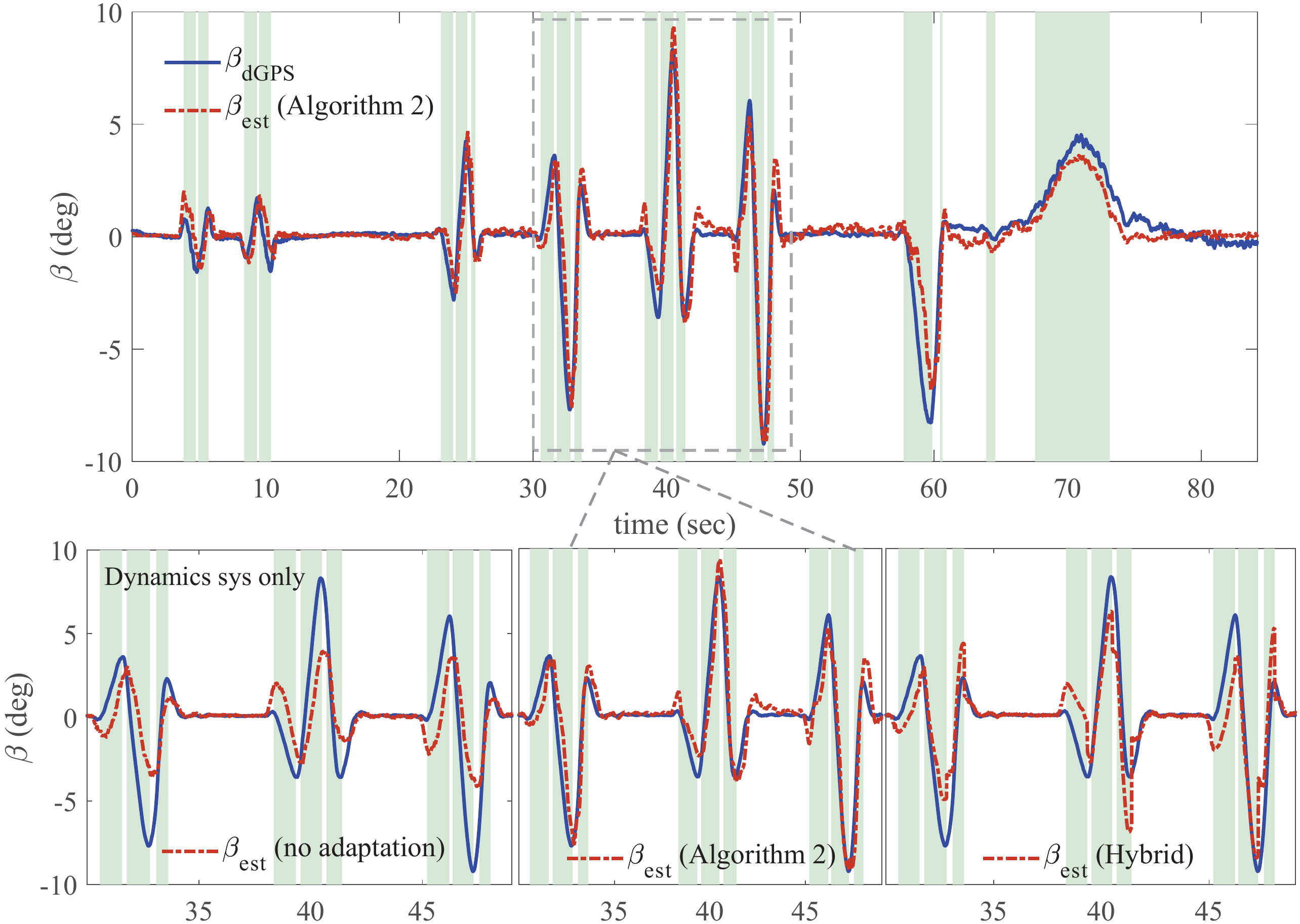}
		\caption{Comparison of the sideslip angle estimation for a severe single lane changing maneuver.} 
		\label{Fig:SLC}
	\end{center}
\end{figure}
\begin{figure}[t]
	\begin{center}
		\includegraphics[width = 3.5in]{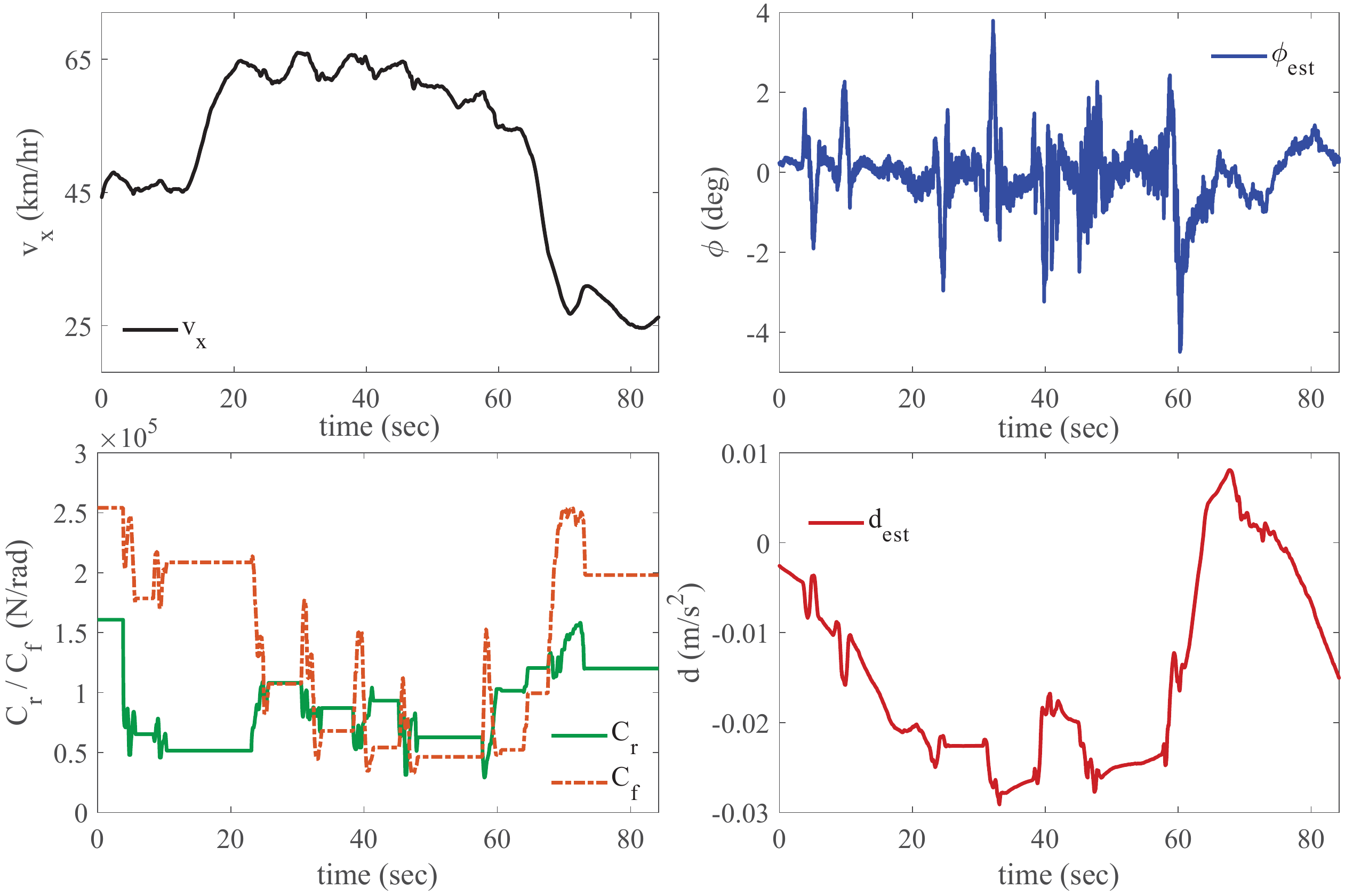}
		\caption{Severe single lane changing test results for Algorithm 2: longitudinal velocity; adapted cornering stiffnesses; estimated bank angle and sensor bias.} \label{Fig:SLC2}
	\end{center}
\end{figure} 

In summary, the proposed algorithm stands out for its robustness in model error and measurement disturbance. It can be used for any driving situation with different road conditions. In addition, reliable estimates for bank angle and sensor bias are also available.
\section{Conclusion}
This paper developed a real-time algorithm for estimation of sideslip angle using inexpensive sensors normally available for electronic stability control (ESC) applications. The algorithm utilizes a kinematics observer to improve the estimation based on a vehicle dynamics model. It also provides estimates of road bank angles, lateral acceleration sensor bias and tire cornering stiffness. 
The algorithm performance is evaluated through several experimental tests and the results indicate that the algorithm provides a good estimate of the vehicle sideslip angle both in normal and extreme maneuvers with different road conditions. 
\begin{figure}[t]
	\begin{center}
		\includegraphics[width = 3.4 in]{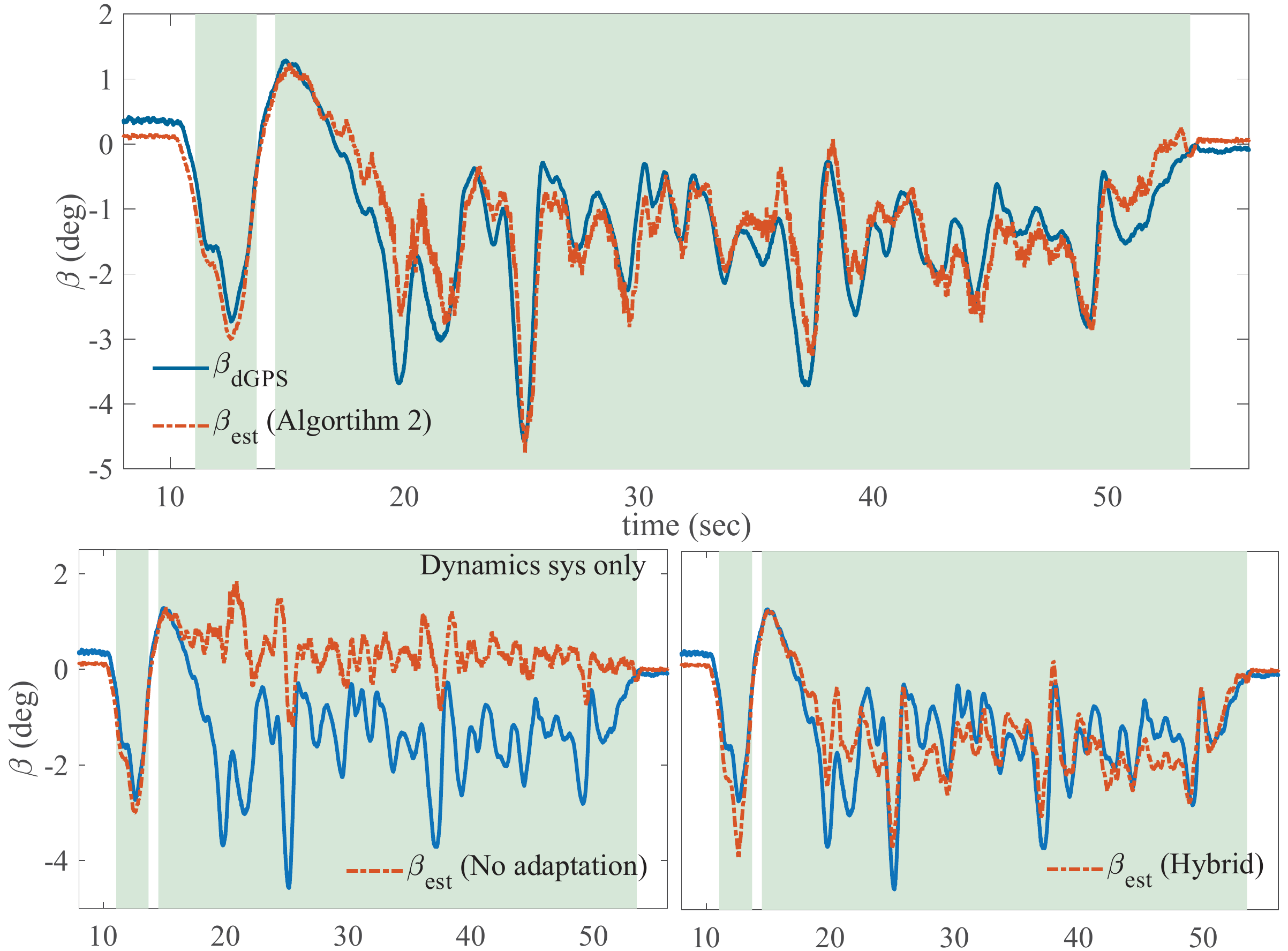}
		\caption{Comparison of the sideslip angle estimation for a steady circular motion.}
		\label{Fig:cir}
	\end{center}
\end{figure}
\begin{figure}[t]
	\begin{center}
		\includegraphics[width = 3.4in]{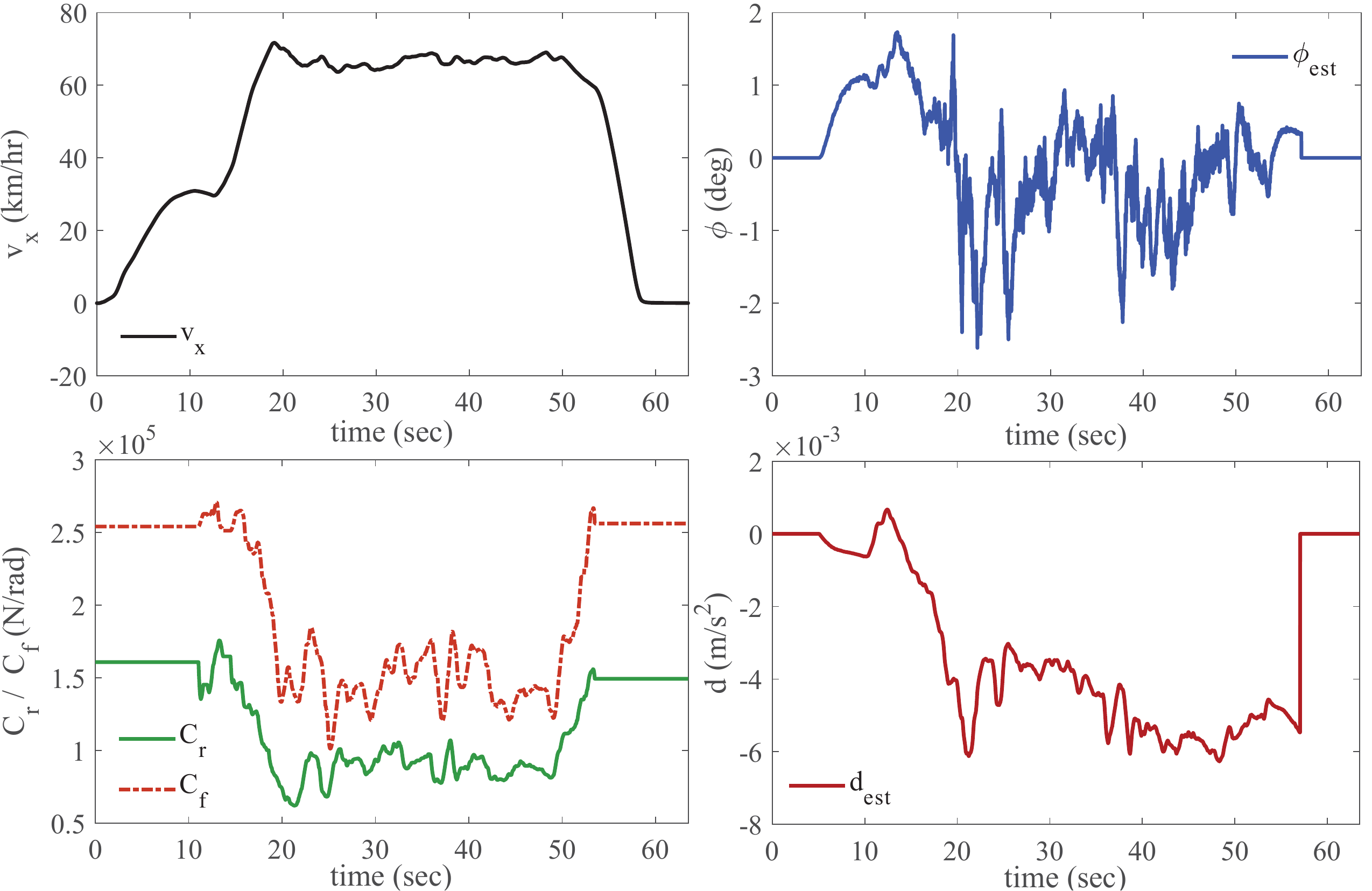}
		\caption{Steady circular motion test results for Algorithm 2: longitudinal velocity; adapted cornering stiffnesses; estimated bank angle and sensor bias.} 
		\label{Fig:cir2}
	\end{center}
\end{figure}
\begin{figure}[t]
	\begin{center}
		\includegraphics[width = 3.4 in]{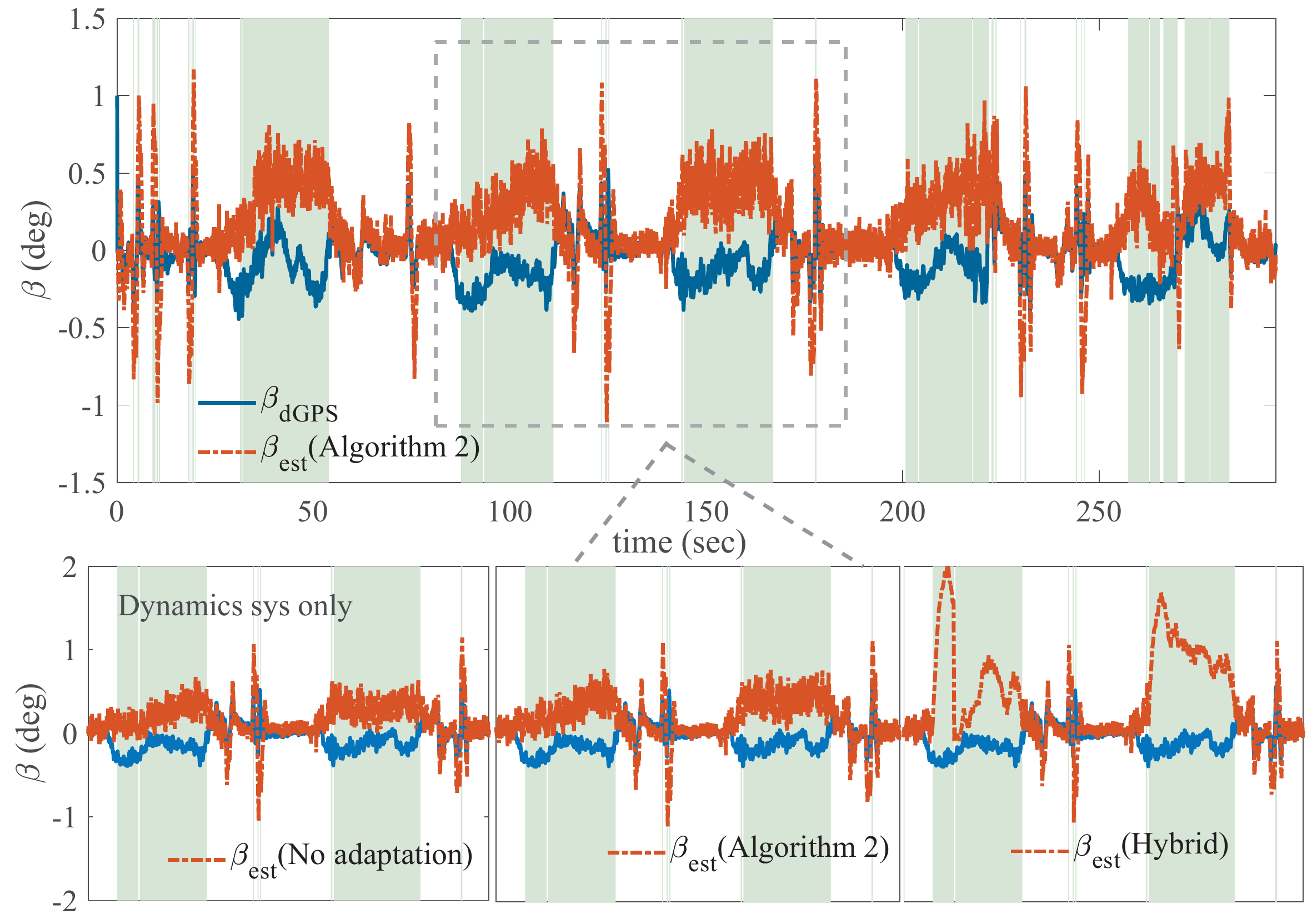}
		\caption{Comparison of the sideslip angle estimation for double lane changing on a bank.} 
		\label{Fig:bank1}
	\end{center}
\end{figure}
\begin{figure}[t]
	\begin{center}
		\includegraphics[width = 3.5in]{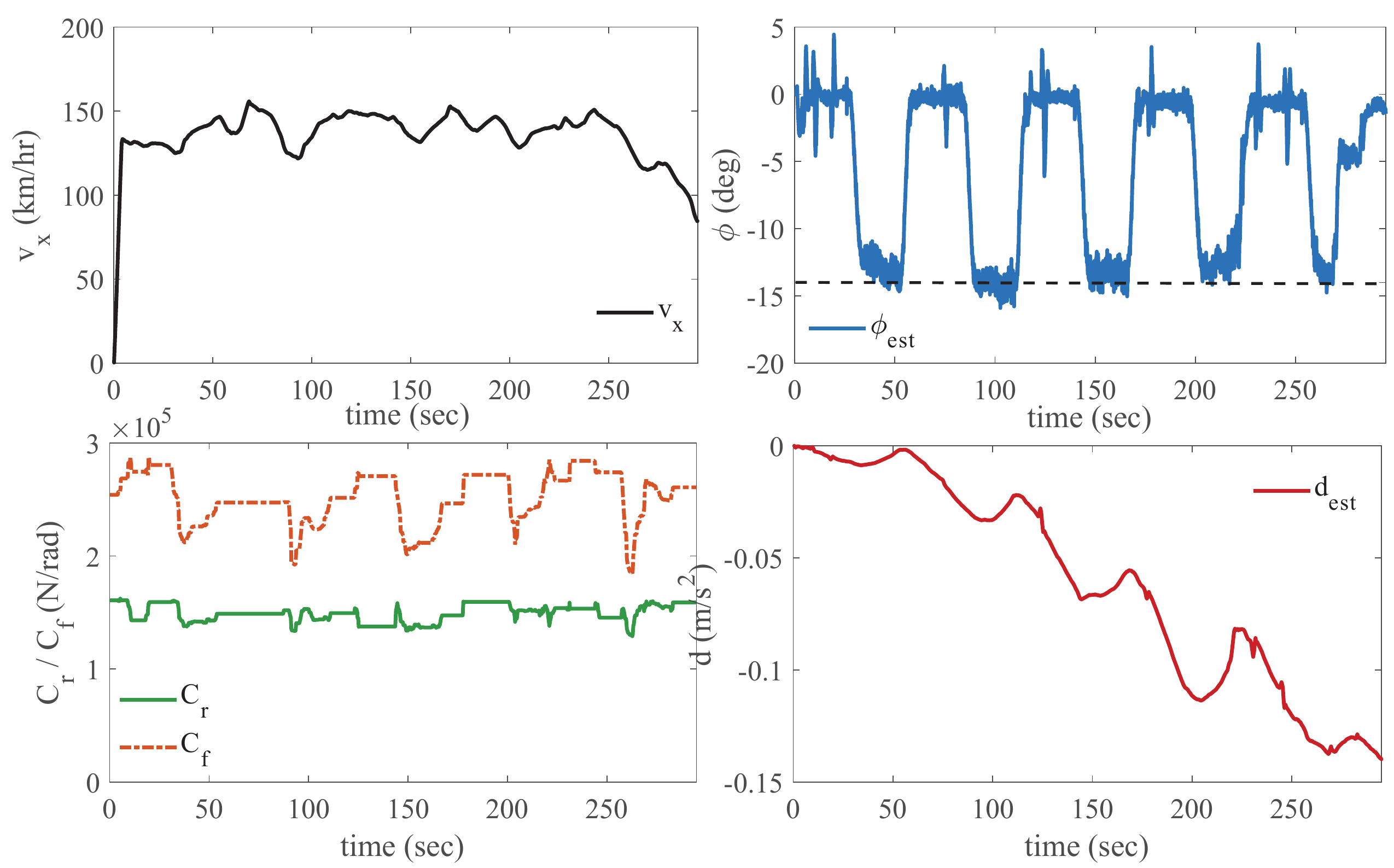}
		\caption{On-bank double lane changing test results for Algorithm 2: longitudinal velocity; adapted cornering stiffnesses; estimated bank angle and sensor bias.} 
		\label{Fig:bank2}
	\end{center}
\end{figure}
\begin{figure}[t]
	\begin{center}
		\includegraphics[width = 3.5 in]{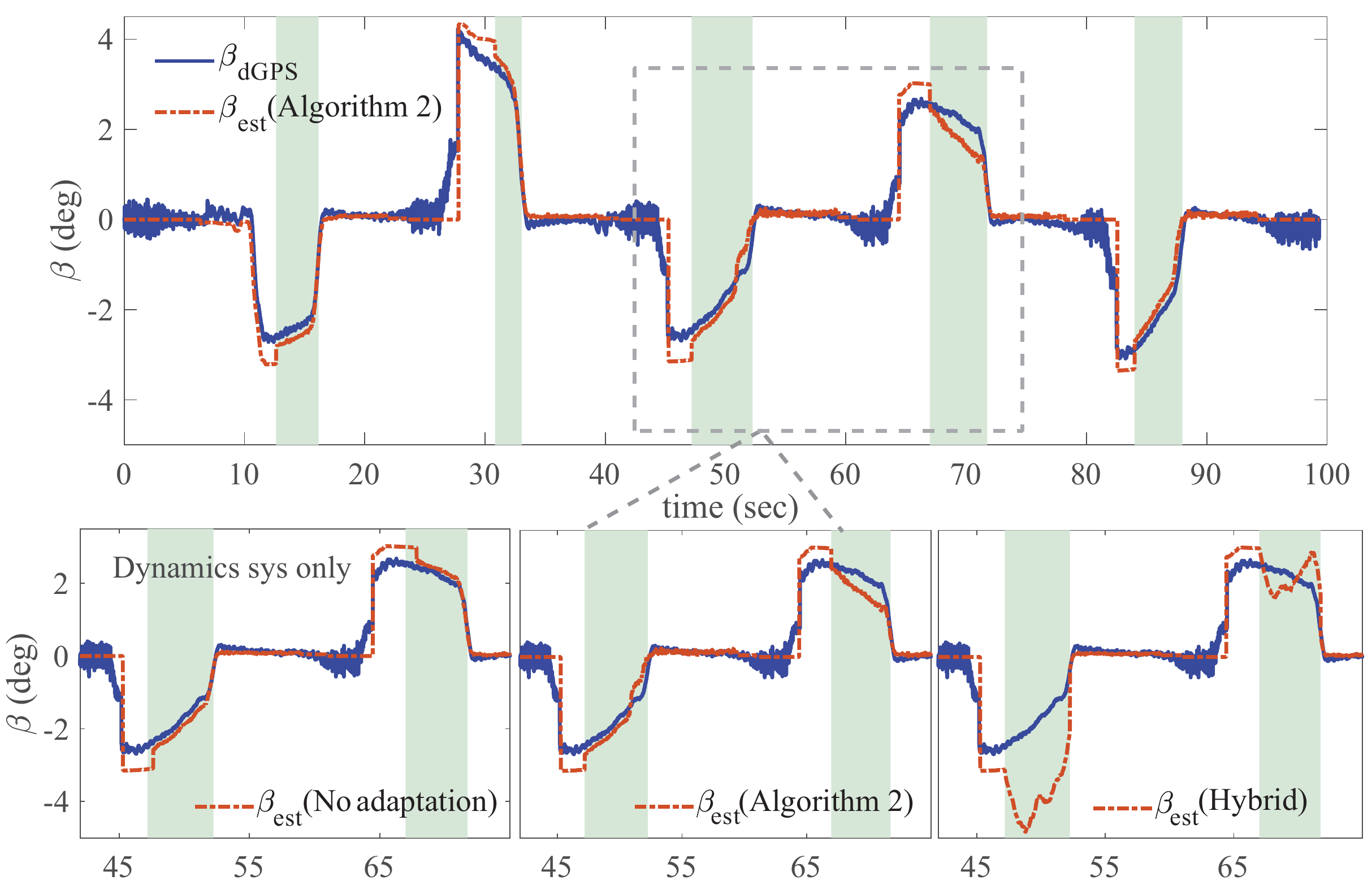}
		\caption{Comparison of the sideslip angle estimation for a stop-N-turn motion.} 
		\label{Fig:stopnturn}
	\end{center}
\end{figure}
\begin{figure}[t]
	\begin{center}
		\includegraphics[width = 3.5in]{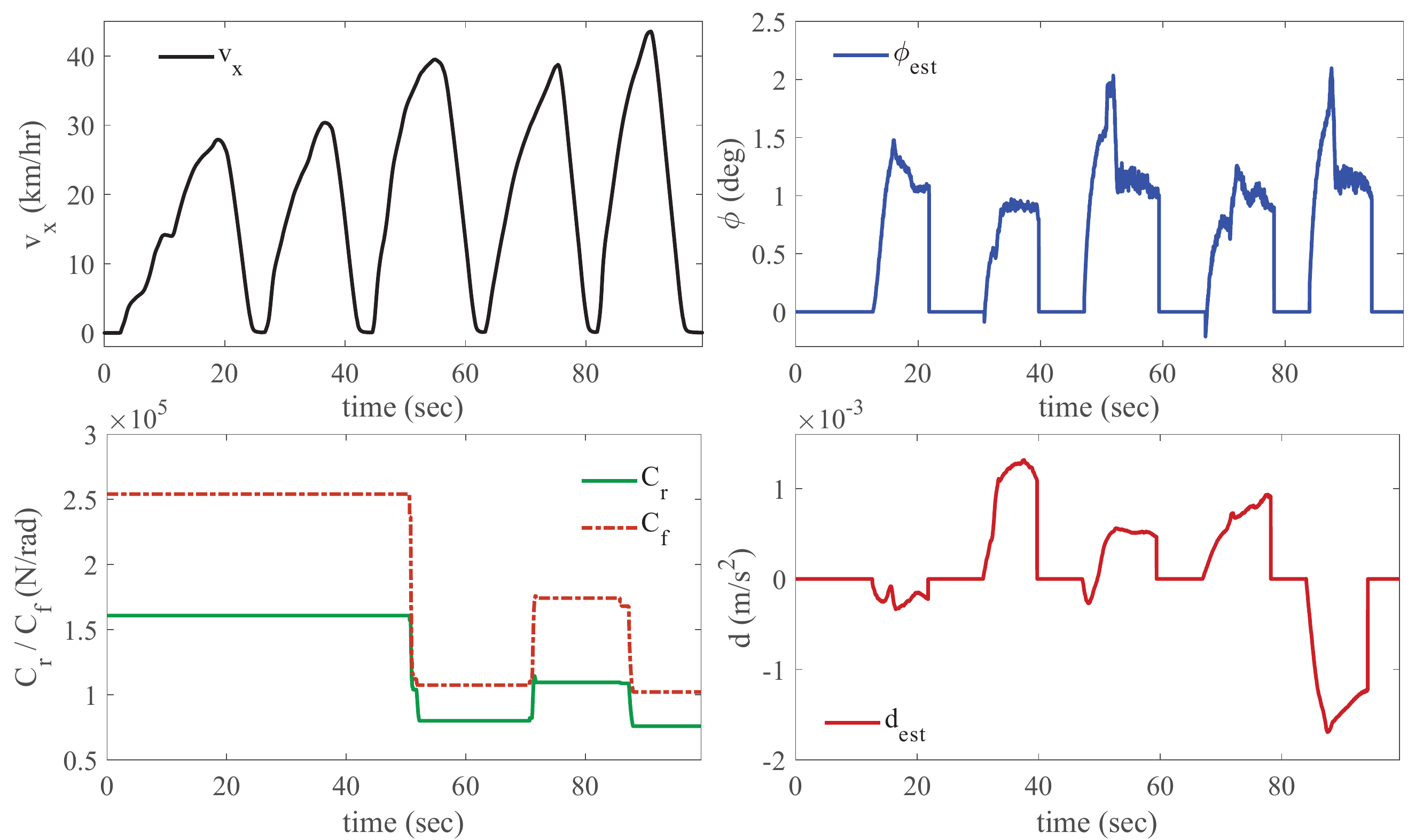}
		\caption{Stop-N-turn test results for Algorithm 2: longitudinal velocity; adapted cornering stiffnesses; estimated bank angle and sensor bias.} 
		\label{Fig:stopnturn2}
	\end{center}
\end{figure} 
\appendices
\section{Asymptotic Hyperstability and strictly positive real \cite{landau1987adaptive}} 
\begin{definition} 
	The feedback system shown in Fig. \ref{Fig:HyperStabilityBlock} is  asymptotically hyperstable if  the state $x_k$ of the linear time invariant system converges to zero  for $k\rightarrow\infty$
\end{definition}
\begin{theorem} 
	The feedback system shown in Fig. \ref{Fig:HyperStabilityBlock} is asymptotically hyperstable  if and only if 
	\begin{enumerate}
		\item the linear time invariant system is strictly positive real.
		\item the nonlinear feedback block satisfies Popov inequality:
		\begin{align}
		\exists~ \gamma > 0, \quad\sum_{k=1}^{k_1} w^T_kv_k \geq -\gamma^2 \quad \forall k_1\geq 0 \nonumber
		\end{align}
		\item the output signal, $w_k$, of the nonlinear block is bounded. 
	\end{enumerate}
\end{theorem}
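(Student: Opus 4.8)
The plan is to recognize Theorem~A.1 as the discrete-time form of Popov's hyperstability theorem and to prove the two implications separately: sufficiency by a passivity (storage-function) argument built on the discrete Kalman--Yakubovich--Popov (KYP) lemma, and necessity by a frequency-domain counterexample construction. Throughout I would fix the interconnection implied by Fig.~\ref{Fig:HyperStabilityBlock}: the forward linear time-invariant block, with state $x_k$ and realization $(A,B,C,D)$, produces the output $v_k$ that drives the nonlinear feedback block, whose output $w_k$ is fed back as the input $u_k = -w_k$ of the linear block. With this convention, conditions~2 and~3 are properties of the admissible feedback class, and the substantive content of the ``iff'' is the equivalence between strict positive realness of the linear part and $x_k\to 0$.

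For sufficiency I would first invoke the discrete positive-real (KYP) lemma: strict positive realness of the forward block guarantees a matrix $P = P^T \succ 0$ and an $\varepsilon > 0$ such that the storage function $V_k = x_k^T P x_k$ obeys the strict dissipation inequality
\begin{align}
V_{k+1} - V_k \le u_k^T v_k - \varepsilon\|x_k\|_2^2. \nonumber
\end{align}
Substituting the feedback law $u_k = -w_k$ and summing from $k=1$ to $N$ telescopes the left side to $V_{N+1}-V_1$, giving
\begin{align}
\varepsilon \sum_{k=1}^{N}\|x_k\|_2^2 \le V_1 - V_{N+1} - \sum_{k=1}^{N} w_k^T v_k. \nonumber
\end{align}
The Popov inequality of condition~2 bounds $-\sum_{k=1}^{N} w_k^T v_k \le \gamma^2$, and $V_{N+1}\ge 0$, so $\varepsilon\sum_{k=1}^{N}\|x_k\|_2^2 \le V_1 + \gamma^2$ uniformly in $N$. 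Hence $\sum_{k=1}^{\infty}\|x_k\|_2^2 < \infty$, which forces $x_k \to 0$. Condition~3 enters here to keep the loop well posed: boundedness of $w_k$ guarantees a bounded input to the linear block, so the trajectory $x_k$ is well defined and the summability estimate actually yields asymptotic decay.

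For necessity I would argue by contraposition that strict positive realness cannot be dropped. If the forward transfer function $H(z) = D + C(zI-A)^{-1}B$ fails to be strictly positive real, there is a frequency $\theta_0$ with $\mathrm{Re}\,H(e^{j\theta_0}) \le 0$; around that frequency I would construct an admissible feedback element---a passive, essentially lossless gain or resonant block that still satisfies the Popov inequality and has bounded output---for which the interconnection sustains a non-decaying mode, so $x_k \not\to 0$. Exhibiting such a block inside the admissible class contradicts asymptotic hyperstability, so condition~1 is necessary.

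I expect the necessity direction to be the main obstacle. The delicate points are (i) the correct discrete-time statement and application of the KYP lemma, since strict positive realness on the unit disk is most cleanly handled by passing through the bilinear transform to the continuous positive-real lemma, and (ii) the explicit construction of the destabilizing feedback that simultaneously meets the Popov inequality and the boundedness requirement while defeating convergence at the critical frequency $\theta_0$. The sufficiency computation, by contrast, is routine once the storage function from KYP is in hand.
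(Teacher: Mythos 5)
The first thing to note is that the paper does not prove this statement at all: it is Theorem A.1 of Appendix A, quoted from \cite{landau1987adaptive} as background machinery for the hyperstability analysis of Section V, so there is no paper proof to compare against and your proposal must stand on its own as a reconstruction of the classical Popov--Landau result. Your sufficiency half does stand, and is the standard argument: the discrete KYP lemma (stated for a minimal realization; in discrete time the strictness comes from $\mathrm{Re}\,H(e^{j\omega})$ attaining a positive minimum on the compact unit circle) gives the storage function $V_k=x_k^TPx_k$ and strict dissipation, the feedback substitution $u_k=-w_k$ plus the Popov inequality caps $-\sum_k w_k^Tv_k$ by $\gamma^2$, telescoping gives $\varepsilon\sum_k\|x_k\|^2\le V_1+\gamma^2$ uniformly, and the terms of a convergent series vanish, so $x_k\to 0$. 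One inconsistency worth fixing: under the strict dissipation inequality you wrote down, this estimate never uses condition 3 at all, so your own computation proves the implication without boundedness of $w_k$; the classical role of condition 3 is well-posedness of the algebraic loop through the feedthrough term and the situation where only non-strict (or state-partial) dissipation is available, and your proof should say explicitly which of these it is invoking rather than attributing the convergence itself to it.

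The genuine gap is the necessity direction. You reduce it to ``construct an admissible feedback element that satisfies the Popov inequality, has bounded output, and sustains a non-decaying mode at the critical frequency $\theta_0$,'' but that construction \emph{is} the hard direction of Popov's theorem, and it is never performed. It is delicate precisely because the block must stay inside the admissible class: for the boundary case $\mathrm{Re}\,H(e^{j\theta_0})=0$ one must exhibit a lossless (energy-conserving) resonator tuned exactly to $\theta_0$ whose interconnection with $H$ oscillates forever, and for $\mathrm{Re}\,H(e^{j\theta_0})<0$ a more elaborate, generally time-varying passive element; verifying the Popov inequality for these while showing the closed loop has a persistent mode is a real argument, not a remark. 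As written, the ``only if'' is unproven. Two mitigating observations: you candidly flag this as the main obstacle, and the paper itself only ever invokes the sufficiency direction (Theorems V.1 and V.2 verify conditions 1--3 to conclude $\varepsilon_n\to 0$), so the half you essentially completed is the half the paper actually relies on.
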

\begin{theorem}
	A single input single output discrete-time system, $G(z)$, is strictly positive real if
	\begin{enumerate}
		\item[1.] the system does not possess any pole outside of or on the unit circle on z-plane.
		\item[2.] $\forall~|\omega|<\pi,~G(e^{-j\omega})+G(e^{j\omega}) > 0$
	\end{enumerate}
\end{theorem}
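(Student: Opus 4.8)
The plan is to show that Conditions 1 and 2 together supply the two ingredients defining strict positive realness in the dissipativity sense — internal stability plus a strictly positive real part on the unit circle — and then to convert that frequency-domain inequality into the time-domain (Popov-type) dissipation inequality that the hyperstability framework of Theorem A.1 actually consumes. First I would fix the working definition of SPR as strict passivity: there exists $\epsilon>0$ with $\sum_k y_k u_k \geq \epsilon \sum_k u_k^2$ along trajectories $y = Gu$. Condition 1 (all poles strictly inside the unit circle) guarantees that $G(z)$ is analytic on and outside the unit circle, so its impulse response is absolutely summable and $G(e^{j\omega})$ is a bounded continuous function of $\omega$. For a real-coefficient $G$ one has $G(e^{-j\omega}) = \overline{G(e^{j\omega})}$, so Condition 2 reads $2\,\mathrm{Re}\,G(e^{j\omega}) > 0$ for all $\omega$; by continuity and $2\pi$-periodicity this quantity attains a positive minimum on the compact circle, upgrading the strict inequality to a uniform margin $\mathrm{Re}\,G(e^{j\omega}) \geq \epsilon > 0$.

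The substantive step is to pass from this frequency-domain margin to a time-domain dissipation inequality. I would invoke the discrete-time Kalman--Yakubovich--Popov (positive-real) lemma: the spectral density $\Phi(\omega) = G(e^{j\omega}) + G(e^{-j\omega}) \geq 2\epsilon$ admits a spectral factorization, equivalently there exists $P = P^T > 0$ solving the discrete positive-real linear matrix inequality attached to a minimal realization $(A,B,C,D)$ of $G$. With such a $P$, the quadratic storage function $V(x) = x^T P x$ satisfies $V(x_{k+1}) - V(x_k) \leq 2 u_k y_k - 2\epsilon\, u_k^2$ along trajectories; summing from $k=1$ to $k_1$ and using $V \geq 0$ telescopes to $\sum_{k=1}^{k_1} u_k y_k \geq \epsilon \sum u_k^2 - \tfrac{1}{2}V(x_1) \geq -\gamma^2$, which is exactly the strict form of the Popov inequality required of the forward block in Theorem A.1.

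The main obstacle is the spectral-factorization / LMI-solvability step: one must argue that strict positivity of $\Phi(\omega)$ on the whole circle, together with $A$ being Schur, is sufficient for the existence of a positive-definite $P$. I would handle this by appealing to the standard discrete KYP lemma, observing that Condition 1 makes $A$ Schur and Condition 2 makes the associated Popov function strictly positive — precisely the hypotheses guaranteeing a stabilizing solution with $P > 0$; the endpoints $\omega = \pm\pi$ and any feedthrough behavior at $z = \infty$ (the term $D$) are covered because compactness of the circle turns the pointwise strict inequality into a uniform one.

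An alternative, more self-contained route bypasses the LMI entirely. By Parseval's identity, for an input with discrete-time Fourier transform $U(e^{j\omega})$ one has $2\sum_k y_k u_k = \frac{1}{2\pi}\int_{-\pi}^{\pi}\Phi(\omega)\,|U(e^{j\omega})|^2\,d\omega \geq \frac{\epsilon}{\pi}\int_{-\pi}^{\pi}|U|^2\,d\omega = 2\epsilon\sum_k u_k^2$, which delivers infinite-horizon strict passivity directly. The only remaining care is to reconcile this infinite-horizon statement with the finite-horizon slack $-\gamma^2$ in the definition, and that reconciliation again reduces to exhibiting a bounded storage function — so the two routes meet at the same point, and I would present the KYP route as primary with the Parseval computation as corroboration.
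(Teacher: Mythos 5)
A preliminary point of comparison: the paper does not prove this statement at all. It appears in Appendix A as background material quoted from the cited reference (Landau, 1987), and within the paper it functions essentially as the operative definition/characterization of strict positive realness consumed by Theorem A.1. So your attempt can only be judged on its own merits, and there it contains one genuine gap.

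The gap is the compactness step. Condition 2 asserts strict positivity of $G(e^{j\omega})+G(e^{-j\omega})$ only on the \emph{open} interval $|\omega|<\pi$; the point $\omega=\pm\pi$, i.e.\ $z=-1$, is excluded. Continuity on the compact circle therefore yields only $\mathrm{Re}\,G(e^{j\pi})\geq 0$, not $>0$, so no uniform margin $\epsilon>0$ can be extracted --- contrary to your explicit claim that the endpoints ``are covered because compactness of the circle turns the pointwise strict inequality into a uniform one.'' Both of your routes consume this margin (the strict KYP inequality needs the Popov function bounded away from zero on the whole circle, and the Parseval estimate uses $\Phi(\omega)\geq 2\epsilon$), so both break down at the same place. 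The gap is not cosmetic: under your working definition of SPR (strict input passivity with margin $\epsilon$), the statement as literally written is \emph{false}. Take $G(z)=1+z^{-1}$: its only pole is at $z=0$, so condition 1 holds, and $G(e^{j\omega})+G(e^{-j\omega})=2+2\cos\omega>0$ for all $|\omega|<\pi$, so condition 2 holds; yet $\mathrm{Re}\,G(e^{j\pi})=0$, and the system is not strictly passive --- with the alternating input $u_k=(-1)^k$ for $0\le k\le N$ (zero elsewhere) one gets $y_k=u_k+u_{k-1}=0$ for $1\le k\le N$, hence $\sum_k y_k u_k = 1$ while $\sum_k u_k^2=N+1$, and the passivity ratio tends to zero. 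To repair the argument you must either read condition 2 on the closed interval $|\omega|\le\pi$ (which is what the standard definition effectively requires; your KYP/Parseval argument then is the standard proof and goes through), or adopt a transfer-function definition of SPR under which the statement is essentially definitional and there is nothing to prove.
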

\section*{Acknowledgment}
The authors gratefully acknowledge the financial support received from the Hyundai Motor Company and the Hyundai-Kia Motors California Proving Ground for supporting this work. Thanks to Wanki Cho, Jinhwan Choi and Jongho Lee for the technical support.
%
\bibliographystyle{IEEEtran}
\bibliography{IEEEmyfile}
\begin{IEEEbiography}[
{
\includegraphics[width=1in,height=1.25in,clip,keepaspectratio]{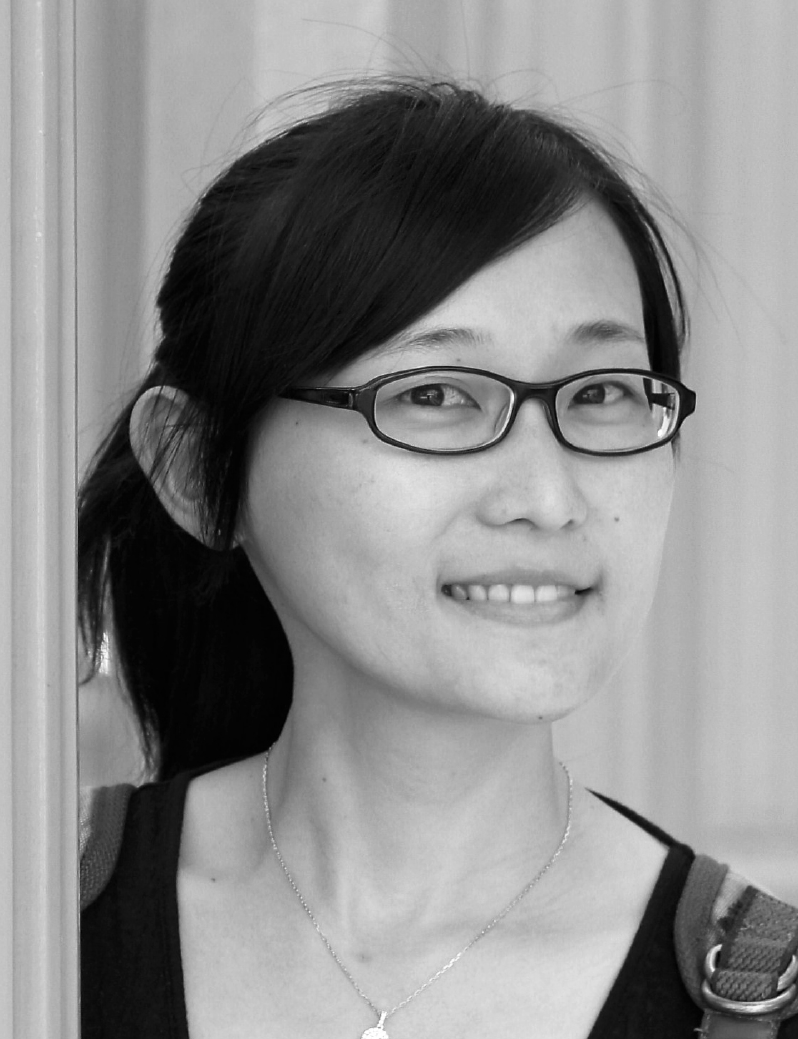}
}
]{Yi-Wen Liao} 
received the B.S. degree in mechanical engineering from National Taiwan University, Taipei, Taiwan, in 2010, and the M.S. degree in mechanical engineering from the University of Michigan, Ann Arbor, MI, USA, in 2012. She is currently working toward the Ph. D. degree in the Department of Mechanical Engineering at the Model Predictive Control Laboratory, University of California, Berkeley, headed by Prof. Francesco Borrelli. 

Her research interests include robust model predictive control, adaptive nonlinear control, vehicle dynamics, and their applications to autonomous driving systems. 
\end{IEEEbiography}
\begin{IEEEbiography}[
{
\includegraphics[width=1in,height=1.25in,clip]{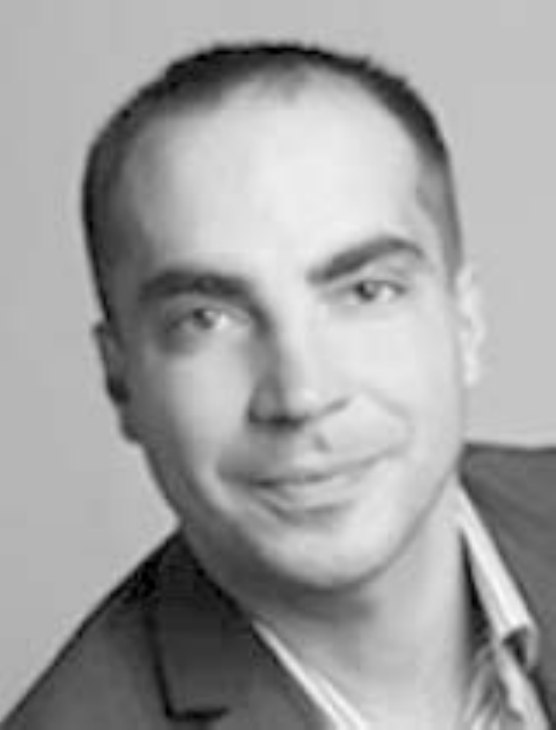}
}
]{Francesco Borrelli} 
received the Laurea degree in computer science engineering from the University of Naples Federico II, Naples, Italy, in 1998, and the Ph.D. degree from ETH-Zurich, Zurich, Switzerland, in 2002. He is currently a Professor with the Department of Mechanical Engineering, University of California, Berkeley, CA, USA. He is the author of more than 100 publications in the field of predictive control and author of the book Constrained Optimal Control of Linear and Hybrid
Systems (Springer-Verlag). 

His research interests include constrained
optimal control, model predictive control and its application to
advanced automotive control and energy efficient building operation. Dr. Borrelli received the 2009 National Science Foundation CAREER
Award and the 2012 IEEE Control System Technology Award. In 2008,
he became the Chair of the IEEE Technical Committee on Automotive
Control.
\end{IEEEbiography}
\vfill 
%
%
\end{document}